\numberwithin{equation}{section}
\numberwithin{equation}{section}
\newtheorem{theorem}{\color{black}\indent Theorem}[section]
\newtheorem{lemma}{\color{black}\indent Lemma}[section]
\newtheorem{remark}{\color{black}\indent Remark}[section]
\newtheorem{corollary}{\color{black}\indent Corollary}[section]
\newtheorem*{lemma*}{Lemma}
\journal{*******}
\begin{document}
\begin{frontmatter}
    \title{The LLN and CLT for the statistical ensembles of discrete integrable Hamiltonian systems}
    \author[author1]{Xinyu Liu \footnote{ E-mail address : liuxy595@jlu.edu.cn}}
    \author[author1]{Xinze Zhang \footnote{ E-mail address : zhangxz24@mails.jlu.edu.cn}}
    \author[author1,author3]{Yong Li\corref{cor1} \footnote{ E-mail address : liyong@jlu.edu.cn} }
    \address[author1]{College of Mathematics, Jilin University, Changchun 130012, P. R. China.}
    \address[author3]{Center for Mathematics and Interdisciplinary Sciences, Northeast Normal University, Changchun 130024, P. R. China.}
    \cortext[cor1]{Corresponding author at : Center for Mathematics and Interdisciplinary Sciences, Northeast Normal University, Changchun 130024, P. R. China.}
    \begin{abstract}
      This paper investigates the behavior of statistical ensembles under iteration map induced by discrete integrable Hamiltonian systems in deterministic case and stochastic case, addressing the problem from two perspectives: the Law of Large Numbers and the Central Limit Theorem. In deterministic case, the Law of Large Numbers simplifies the convergence conditions to the extent that the Riemann-Lebesgue lemma is no longer required. In the stochastic setting, we extend the results to general stochastic processes, beginning with the perturbation term represented by standard Brownian motion. Moreover, we establish a Central Limit Theorem for the statistical ensemble. A numerical example is also included.

    \end{abstract}
    \begin{keyword}
    statistical ensemble, discrete Hamiltonian System, Law of Large numbers, Central Limit Theorem
    \end{keyword}

    \end{frontmatter}


    \section{Introduction}\label{sec:1}
    \setcounter{equation}{0}
    \setcounter{definition}{0}
    \setcounter{proposition}{0}
    \setcounter{lemma}{0}
    \setcounter{remark}{0}

    Discrete integrable Hamiltonian systems have attracted significant attention in mathematical physics due to their rich structures and conservation laws. The iterative maps associated with these systems not only reveal long - term dynamical behavior but also serve as essential tools for analyzing the evolution of statistical ensembles. In this context, investigating the convergence of statistical ensembles under iteration maps - particularly the applicability of the Law of Large Numbers (LLN) and Central Limit Theorems (CLT) - has become a prominent focus of current research.
    
    Early research primarily focused on the statistical behavior of deterministic dynamical systems. For instance, Tirnakli et al.~\cite{tirnakli2007central} examined the probability density of iterative sums in such systems and demonstrated that the CLT holds when the system exhibits sufficient mixing. However, near the critical point of period doubling, strong correlations between iterations render the CLT inapplicable, and the probability density converges to a $q$-Gaussian distribution, suggesting a power - law generalization of the CLT.

In recent years' research, on the one hand, researchers have increasingly focused on the statistical behavior of dynamic systems under stochastic perturbations. Horbacz and Katarzyna \cite{horbacz2016central} investigated the CLT in stochastic dynamical systems, emphasizing the roles of Markov operators and invariant measures in the proof. Similarly, Shirikyan \cite{shirikyan2006law} examined the LLN and the CLT in stochastically forced partial differential equations, highlighting the influence of randomness on the long - term behavior of the system. Wang and Li \cite{wang2025central} employed the Nagaev-Guivarc'h method to demonstrate that, under white noise disturbances, the perturbed trajectories of integrable Hamiltonian systems still follow a Gaussian distribution, confirming the system's stability in the presence of randomness. Additionally, Liu and Li \cite{liu2023statistical} investigated statistical ensembles in integrable Hamiltonian systems with almost periodic transitions, showing that, under long - time averaging, the probability measures converge to time - averaged measures, thereby extending the applicability of the Riemann-Lebesgue lemma. Related developments also include the dependent CLT and invariance principles of McLeish~\cite{McLeish1974}, the LLN/CLT theory for randomly forced dissipative PDEs by Kuksin and Shirikyan~\cite{kuksinshirikyan2006}, small-noise asymptotic expansions for SPDEs due to Di~Persio and Mastrogiacomo~\cite{DiPersioMastrogiacomo2011}, and the probabilistic mean-field framework summarized in the monographs of Carmona and Delarue~\cite{CarmonaDelarue2018}.

On the other hand, The CLT plays an increasingly important role in research in various fields
    In the interdisciplinary domain of statistical mechanics and thermodynamics, D'Alessio et al. \cite{d2016quantum} investigated the transition from quantum chaos and eigenstate thermalization to classical statistical behavior, emphasizing the central role of the eigenstate thermalization hypothesis (ETH) in explaining the thermalization of isolated chaotic systems. Furthermore, Li et al. \cite{2021Central} analyzed the CLT for mesoscale eigenvalue statistics of deformable Wigner matrices and sample covariance matrices, uncovering the universal behavior of linear statistics across different spectral regions.
    In computational statistical physics, Gillespie \cite{gillespie2013computing} proposed a sampling - based method for calculating the partition function, ensemble mean, and density of states in lattice spin systems, demonstrating the applicability of the CLT in numerical simulations. Furthermore, Qu et al. \cite{qu2022law} investigated the LLN, the CLT, and the iterative logarithm rule in Bernoulli uncertain sequences, thereby extending the applicability of these classical results within the framework of uncertainty mathematics.
    In quantum systems, the CLT has also been extensively studied. For instance, Gavalakis et al. \cite{gavalakis2024entropy} proposed a discrete version of the CLT grounded in information theory, demonstrating the convergence of the relative entropy between discrete random variables and the standardized Gaussian distribution. Additionally, Deleporte et al. \cite{deleporte2025central} examined the CLT for smooth statistics in one-dimensional free fermion systems and employed a Szeg{\H{o}}-type asymptotic method to reveal the statistical behavior of the system under specific conditions.

 This work investigates the statistical ensemble of discrete integrable Hamiltonian systems through two aspects: the Law of Large Numbers and the Central Limit Theorem. In deterministic case, we establish convergence results without relying on the Riemann-Lebesgue lemma, but as a cost we need a nonresonant condition. For stochastic case, we derive convergence properties by modeling the perturbation term as standard Brownian motion, eliminating the need for both the Riemann-Lebesgue lemma and nonresonant condition. This simplification arises from the 
 special property of the standard Brownian motion $E(\exp(\mathrm{i}cB_t))=E(\exp(-\frac{1}{2}c^2t))$.
Extending this analysis to general stochastic processes case, we get the same result and provide the convergence rate $1/N$.
However the nonresonant condition is necessary in this case because of the loss of the property of Brownian motion. Crucially, our findings demonstrate that convergence outcomes -- in both deterministic and stochastic cases -- are independent of the iteration maps and depend solely on the observation function and the distribution of the initial space.
The rapid convergence of statistical ensembles allows for the establishment of corresponding Central Limit Theorems. In addition, a numerical example was performed to validate the theoretical results presented in this paper.

The results provide a unified and model-agnostic framework that separates the roles of observables, initial distributions, and dynamics in discrete integrable Hamiltonian systems. By delivering LLN and CLT under analytic-strip hypotheses and by quantifying modewise exponential decorrelation under Brownian perturbations, the paper yields an explicit and computable limiting variance via Ces\`aro limits of lag covariances. Practically, these findings enable principled uncertainty quantification and error bars for simulation outputs, guide sampling horizons in numerical studies, and offer diagnostics for distinguishing integrable from near-integrable behavior in applications ranging from accelerator physics and celestial mechanics to plasma and condensed-matter models.

The remainder of this paper is organized as follows. Section 2 introduces discrete integrable Hamiltonian systems and their associated iterative maps, formulates the main problems concerning statistical ensembles addressed in this work, and defines the relevant notation. Section 3 investigates the convergence behavior of statistical ensembles under the action of iterative maps, in both deterministic and stochastic perturbation settings, and presents a convergence result in the form of a Law of Large Number. Section 4 provides the proof of the Central Limit Theorem. In Section 5, we present a numerical example and perform simulations to analyze the convergence rate and limiting distribution in the specific case.
We made a conclusion of this paper in Section 6.

    \section{Discrete integrable Hamiltonian system and its statistical ensemble}
    In this section, we introduce the system studied in this paper along with its associated map, outline the non-resonant conditions that must be satisfied, and present the corresponding convergence theorem for the statistical ensemble.
    
    For an integrable Hamiltonian system of action-angle variable form
    \begin{equation}\label{integrablehamiltoniansystem}
H(I,\theta)=h(I),
    \end{equation}
    where $(I,\theta) \in \Omega \times \mathbb{T}^{n}$. $\Omega \subset \mathbb{R}^{n}$ is an open and bounded set, $\mathbb{T}^{n} = \mathbb{R}^{n}/\mathbb{Z}^{n}$. Define an iteration map
    \begin{equation}
        \mathcal{F} : 
        \begin{cases}
        \theta^{+} = \theta + \omega(I), \\
        I^{+} = I ,
        \end{cases}
    \end{equation}
    and $\omega = h_{I}$ is called frequency map which satisfies the following  nonresonant condition 
    \begin{equation}\label{nonresonantcondition}
    \langle k,\omega(I) \rangle \neq 2m\pi \ \ (k\in \mathbb{Z}^{n}, k\neq \vec{0}, m\in \mathbb{Z}, I\in \Omega). 
    \end{equation}
    
    It is easy to know 
    \begin{equation}\label{diedaiyingshequeding}
        \mathcal{F}^{j} : 
        \begin{cases}
        \theta = \theta_0 + j\omega(I), \\
        I = I_0 ,
        \end{cases}
    \end{equation}
    in which $(I_0,\theta_0)$ is a random initial value described by a probability density function $\rho_{0}(I,\theta) \in \bm{\mathrm{L}}^{2}(\Omega \times \mathbb{T}^{n})$. 
    
    Given an observable function $G$, the expected value of the observation function G after any iteration is
    \begin{equation}
      \langle G \rangle _{j}= \int_{\Omega \times \mathbb{T}^{n}}G(\mathcal{F}^{j}(I,\theta))\rho_{0}(I,\theta)dId\theta.
    \end{equation}
    We refer to $\langle G \rangle_{j}$ as the statistical ensemble of the initial space after $j$ iterations of $\mathcal{F}$ and have the following result.
\section{The LLN for statistical ensembles under iterative maps}
This section focuses on the convergence of statistical ensembles under iterative maps, which are considered in two cases: deterministic and stochastic perturbations.
We begin with the deterministic case.

\begin{theorem}\label{theorem2.1}
Suppose the observable $G$ is continuous and bounded, denoted $G(I,\theta)\in \bm{\mathrm{C}}_{b}(\Omega\times\mathbb{T}^{n})$. 
If the initial values of the iterative map $\mathcal{F}$ are distributed with probability density $\rho_{0}$, and the frequency map $\omega$ satisfies condition~\eqref{nonresonantcondition}, then
\begin{equation*}
\lim_{N\to\infty}\frac{1}{N}\sum_{j=1}^{N}\langle G\rangle_{j}=\langle \bar G(I)\rangle_{0},
\end{equation*}
where $\displaystyle \bar G(I)=\frac{1}{(2\pi)^{n}}\int_{\mathbb{T}^{n}} G(I,\theta)\,d\theta$.
\end{theorem}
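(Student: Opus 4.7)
The plan is to Fourier-expand the observable in the angle variable and use the explicit form of $\mathcal{F}^j$ from \eqref{diedaiyingshequeding} to reduce $\langle G\rangle_j$ to a series of oscillatory integrals on $\Omega$. Write $G(I,\theta)=\sum_{k\in\mathbb{Z}^n}\tilde G_k(I)\,e^{i\langle k,\theta\rangle}$ and $\rho_0(I,\theta)=\sum_{m\in\mathbb{Z}^n}\tilde\rho_{0,m}(I)\,e^{i\langle m,\theta\rangle}$ in the $L^2$ sense. Since $\mathcal{F}^j(I,\theta)=(I,\theta+j\omega(I))$, multiplying the two expansions and integrating in $\theta$ over $\mathbb{T}^n$ kills every term with $k+m\neq 0$ and leaves
\begin{equation*}
\langle G\rangle_j \;=\; c_n\sum_{k\in\mathbb{Z}^n}\int_\Omega \tilde G_k(I)\,\tilde\rho_{0,-k}(I)\,e^{ij\langle k,\omega(I)\rangle}\,dI,
\end{equation*}
where $c_n$ is the normalizing constant fixed by the torus convention.

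I would then take the Cesàro average over $j=1,\dots,N$ and isolate the $k=0$ contribution. For $k=0$ the exponential factor is identically $1$, and a direct computation matches the resulting term with $\langle \bar G\rangle_0$ (since $\tilde G_0(I)=\bar G(I)$ and the $\theta$-integral of $\rho_0$ extracts $\tilde\rho_{0,0}$). For every $k\neq 0$, the nonresonant condition \eqref{nonresonantcondition} guarantees $e^{i\langle k,\omega(I)\rangle}\neq 1$ for each $I\in\Omega$, so the geometric average $N^{-1}\sum_{j=1}^{N}e^{ij\langle k,\omega(I)\rangle}$ has modulus at most $1$ and converges pointwise to $0$ on $\Omega$. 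Dominated convergence with majorant $|\tilde G_k(I)\tilde\rho_{0,-k}(I)|$ then makes the $k$th summand vanish as $N\to\infty$.

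The remaining point is the interchange of the outer sum over $k$ with the limit. Because $\Omega\times\mathbb{T}^n$ has finite Lebesgue measure and $G$ is bounded, $G\in L^2(\Omega\times\mathbb{T}^n)$, while $\rho_0\in L^2$ by hypothesis; Parseval in $\theta$ gives $\sum_k\|\tilde G_k\|_{L^2(\Omega)}^2<\infty$ and $\sum_k\|\tilde\rho_{0,-k}\|_{L^2(\Omega)}^2<\infty$. Cauchy--Schwarz in $\ell^2$ then furnishes the $k$-summable majorant $a_k=\|\tilde G_k\|_{L^2(\Omega)}\,\|\tilde\rho_{0,-k}\|_{L^2(\Omega)}$ for each summand, and a second application of dominated convergence, this time on $\mathbb{Z}^n$, completes the argument.

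I expect the main obstacle to be precisely this second interchange: the pointwise decay of the phase average on $\Omega$ is not uniform in $I$, because $\langle k,\omega(I)\rangle$ can approach $2\pi\mathbb{Z}$ arbitrarily closely, and it is not uniform in $k$ either, so no quantitative rate is available at this level of generality. The scheme works only because the $L^2$ assumption on $\rho_0$ (combined with boundedness of $G$) supplies summability through Parseval; this is exactly the ingredient that allows the authors to bypass the Riemann--Lebesgue lemma that is usually invoked in such ensemble-averaging statements.
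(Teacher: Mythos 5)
Your proposal is correct and follows essentially the same route as the paper's proof: Fourier expansion in $\theta$ reduces $\langle G\rangle_j$ to the modewise oscillatory integrals, the nonresonance condition kills the Ces\`aro averages of the nonzero modes via the elementary geometric-sum bound, and Parseval plus Cauchy--Schwarz supplies the $k$-summable majorant that justifies the interchange of limit, sum, and integral. Your closing observation that the $L^2$ hypothesis on $\rho_0$ is precisely what replaces the Riemann--Lebesgue lemma matches the paper's stated motivation.
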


\begin{remark}
According to the theorem, the long-time state of the statistical ensemble depends only on the observable $G$ and the initial density $\rho_{0}$, and is independent of the specific iterative map $\mathcal{F}$.
\end{remark}

\begin{proof}
Since $G$ is bounded and continuous on the bounded set $\Omega\times\mathbb{T}^{n}$, we have $G(I,\cdot)\in L^{2}(\mathbb{T}^{n})$ for all $I\in\Omega$. 
Moreover, if $\rho_{0}\in L^{2}(\Omega\times\mathbb{T}^{n})$ is real-valued, then $\rho_{0}(I,\cdot)\in L^{2}(\mathbb{T}^{n})$ for a.e.\ $I\in\Omega$. 
By modifying $\rho_{0}$ on a null set in $\Omega\times\mathbb{T}^{n}$ if necessary, we may assume $\rho_{0}(I,\cdot)\in L^{2}(\mathbb{T}^{n})$ for all $I\in\Omega$. 
Therefore, by Parseval,
\[
\frac{1}{(2\pi)^{n}}\int_{\mathbb{T}^{n}} G(I,\theta)\,\overline{\rho_{0}(I,\theta)}\,d\theta
=\sum_{k\in\mathbb{Z}^{n}} \hat G_{k}(I)\,\overline{\hat\rho_{0,k}(I)}.
\]
Since $\widehat{G\circ\mathcal{F}^{j}}_{k}(I)=e^{\mathrm{i}\langle k,\omega(I)\rangle j}\hat G_{k}(I)$ and $\overline{\hat\rho_{0,k}}=\hat\rho_{0,-k}$ (because $\rho_{0}$ is real-valued), we obtain
\[
\langle G\rangle_{j}
=(2\pi)^{n}\int_{\Omega}\sum_{k\in\mathbb{Z}^{n}}
\hat G_{k}(I)\,\hat\rho_{0,-k}(I)\,e^{\mathrm{i}\langle k,\omega(I)\rangle j}\,dI.
\]

Next, using $|e^{\mathrm{i}\cdot}|=1$ and applying Cauchy--Schwarz first in $k$ and then in $I$,
\begin{align*}
\Big|\tfrac{1}{N}\sum_{j=1}^{N}\langle G\rangle_{j}\Big|
&\le (2\pi)^{n}\int_{\Omega}\sum_{k\in\mathbb{Z}^{n}}
|\hat G_{k}(I)|\,|\hat\rho_{0,-k}(I)|\,dI\\
&\le (2\pi)^{n}
\Big(\int_{\Omega}\sum_{k\in\mathbb{Z}^{n}}|\hat G_{k}(I)|^{2}\,dI\Big)^{1/2}
\Big(\int_{\Omega}\sum_{k\in\mathbb{Z}^{n}}|\hat\rho_{0,-k}(I)|^{2}\,dI\Big)^{1/2}\\
&= \|G\|_{L^{2}(\Omega\times\mathbb{T}^{n})}\,
   \|\rho_{0}\|_{L^{2}(\Omega\times\mathbb{T}^{n})}\,<\infty.
\end{align*}

Thus the integrand is dominated by an $I$-integrable function (independent of $N$), and by Fubini's theorem and the dominated convergence theorem we can pass the Cesàro limit inside:
\[
\lim_{N\to\infty}\frac{1}{N}\sum_{j=1}^{N}\langle G\rangle_{j}
=(2\pi)^{n}\int_{\Omega}\sum_{k\in\mathbb{Z}^{n}}
\hat G_{k}(I)\,\hat\rho_{0,-k}(I)\,
\lim_{N\to\infty}\frac{1}{N}\sum_{j=1}^{N}e^{\mathrm{i}\langle k,\omega(I)\rangle j}\,dI.
\]

By the nonresonance assumption~\eqref{nonresonantcondition}, for $k\neq 0$ we have 
$\displaystyle \lim_{N\to\infty}\tfrac{1}{N}\sum_{j=1}^{N}e^{\mathrm{i}\langle k,\omega(I)\rangle j}=0$
for a.e.\ $I$, while for $k=0$ the Ces\`{a}ro average equals $1$. Hence,
\[
\lim_{N\to\infty}\frac{1}{N}\sum_{j=1}^{N}\langle G\rangle_{j}
=(2\pi)^{n}\int_{\Omega}\hat G_{0}(I)\,\hat\rho_{0,0}(I)\,dI.
\]
Since $\hat G_{0}(I)=\bar G(I)$ and 
$\hat\rho_{0,0}(I)=\frac{1}{(2\pi)^{n}}\int_{\mathbb{T}^{n}}\rho_{0}(I,\theta)\,d\theta$, we get
\[
(2\pi)^{n}\int_{\Omega}\hat G_{0}(I)\,\hat\rho_{0,0}(I)\,dI
=\int_{\Omega}\bar G(I)\Big(\int_{\mathbb{T}^{n}}\rho_{0}(I,\theta)\,d\theta\Big)\,dI
=\int_{\Omega\times\mathbb{T}^{n}}\bar G(I)\,\rho_{0}(I,\theta)\,dI\,d\theta
=\langle \bar G(I)\rangle_{0}.
\]
This completes the proof.
\end{proof}

To avoid too many lengthy formulas that are unnecessary in the remainder of this paper
, we introduce the following notations. Given an initial value space with a fixed probability density function, the statistical ensemble corresponding to the observation function $G$ after $n$ iterations of the map $\mathcal{F}$ is denoted by $\langle G \rangle_{\mathcal{F}}^{j}$ which means
\begin{equation}\nonumber
  \langle G \rangle_{\mathcal{F}}^{j}=\int_{\Omega \times \mathbb{T}^{n}} G(\mathcal{F}^{j}(I,\theta)) \rho_{0}(I,\theta)dId\theta
\end{equation}
and we set
\begin{equation}\nonumber
  V_{N}(\langle G \rangle_{\mathcal{F}})=\frac{1}{N}\sum_{j=1}^{N}\langle G \rangle_{\mathcal{F}}^{j}, \ \ V_{\infty}(\langle G \rangle_{\mathcal{F}})=\lim_{N \to \infty}\frac{1}{N}\sum_{j=1}^{N}\langle G \rangle_{\mathcal{F}}^{j}.
\end{equation}

Next, we introduce the iterative map associated with discrete integrable Hamiltonian systems perturbed by random terms, investigate the corresponding statistical ensemble, and establish the relevant central limit theorem. We begin by considering the case where the random term is a standard Brownian motion and then extend the analysis to more general stochastic processes.

Consider an integrable Hamiltonian system in the form of an action angle variable
\begin{equation}\nonumber
  H(I,\theta)=H_{0}(I),
\end{equation}
where $(I,\theta)\in \Omega \times \mathbb{T} \subset \mathbb{R}\times \mathbb{R}/\mathbb{Z}$. We define the iteration map $\mathcal{S}_{H_0}$ with the form
\begin{equation}\label{mapwithB}
\mathcal{S}_{H_0}^{j}(I,\theta)=(I,\theta + j \omega(I)+cB_{j})
\end{equation}
in which $B_j$ is the standard Brownian motion, $c\in \mathbb{R}^{+}$ is a fixed constant, and $j\in \mathbb{N}$. Suppose the initial point $(I,\theta)$ of the iteration map $\mathcal{S}_{H_0}^{j}$ is described by the probability 
density function $\rho(I,\theta)$, we have the following result.

\begin{theorem}\label{theorem3.1}
Assume the observable $G\in \bm{\mathrm{C}}_{b}(\Omega\times\mathbb{T})$ and the initial density $\rho_{0}\in \bm{\mathrm{L}}^{2}(\Omega\times\mathbb{T})$. 
Under the iterative map $\mathcal{S}_{H_0}$ we have
\[
  \lim_{N \to \infty} \mathbb{E}\big(V_{N}(\langle G \rangle)_{\mathcal{S}}\big)=\langle \bar{G}(I) \rangle_{0},
\]
where $\displaystyle \bar{G}(I)=\frac{1}{2\pi}\int_{\mathbb{T}} G(I,\theta)\,d\theta$.
\end{theorem}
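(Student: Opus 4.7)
The plan is to imitate the Fourier-theoretic argument of Theorem~\ref{theorem2.1}, but replace the deterministic Ces\`aro cancellation $\tfrac{1}{N}\sum_j e^{\mathrm{i}k j\omega(I)}\to \delta_{k,0}$ (which required the nonresonant condition~\eqref{nonresonantcondition}) by the Gaussian damping produced by the Brownian motion via the identity $\mathbb{E}[e^{\mathrm{i}k c B_j}]=e^{-\tfrac12 k^{2}c^{2}j}$ that is emphasised in the introduction. The reason no diophantine hypothesis on $\omega$ should be needed is that the damping factor alone already forces exponential decay in $j$ for every nonzero mode $k$.

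First I would expand $G(I,\cdot)$ and $\rho_{0}(I,\cdot)$ in Fourier series on $\mathbb{T}$, which is legitimate exactly as in Theorem~\ref{theorem2.1} (modifying $\rho_{0}$ on an $I$-null set if necessary). Integrating in $\theta$ against $\rho_{0}$ via Parseval yields
\[
\langle G\rangle_{\mathcal{S}}^{j} \;=\; 2\pi\int_{\Omega}\sum_{k\in\mathbb{Z}} \hat G_{k}(I)\,\hat\rho_{0,-k}(I)\, e^{\mathrm{i}k(j\omega(I)+cB_{j})}\,dI,
\]
and taking expectation over $B_{j}$ (Fubini is trivial since $G$ is bounded and $\rho_{0}\in L^{2}$) gives
\[
\mathbb{E}\big[\langle G\rangle_{\mathcal{S}}^{j}\big] \;=\; 2\pi\int_{\Omega}\sum_{k\in\mathbb{Z}} \hat G_{k}(I)\,\hat\rho_{0,-k}(I)\, e^{\mathrm{i}k j\omega(I)}\, e^{-\tfrac12 k^{2}c^{2}j}\,dI.
\]

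Then I would form the Ces\`aro average and split the $k=0$ and $k\neq 0$ contributions. For $|k|\geq 1$ the damping alone yields, uniformly in $I$ and $k$,
\[
\Big|\tfrac{1}{N}\sum_{j=1}^{N} e^{\mathrm{i}k j\omega(I)}\,e^{-\tfrac12 k^{2}c^{2}j}\Big|
\;\le\; \tfrac{1}{N}\sum_{j=1}^{\infty}e^{-\tfrac12 c^{2}j}\;=\;\tfrac{C_{c}}{N}\;\xrightarrow{N\to\infty}\;0,
\]
while for $k=0$ the average equals $1$ for every $N$. To exchange the Ces\`aro limit with the $I$-integral and the $k$-sum I would reuse verbatim the Parseval/Cauchy--Schwarz majorant from Theorem~\ref{theorem2.1}: $\sum_{k}|\hat G_{k}(I)|\,|\hat\rho_{0,-k}(I)|$ is dominated in $L^{1}(\Omega)$ by $\|G\|_{L^{2}}\|\rho_{0}\|_{L^{2}}<\infty$. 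Dominated convergence then produces
\[
\lim_{N\to\infty}\mathbb{E}\big[V_{N}(\langle G\rangle)_{\mathcal{S}}\big] \;=\; 2\pi\int_{\Omega}\hat G_{0}(I)\,\hat\rho_{0,0}(I)\,dI \;=\; \langle \bar G(I)\rangle_{0},
\]
by the same identification of the zero Fourier coefficients as at the end of Theorem~\ref{theorem2.1}.

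The main obstacle is purely bookkeeping: one must justify a multi-layered Fubini exchange between the expectation over $B_{j}$, the Fourier sum in $k$, the $\theta$- and $I$-integrations, and the Ces\`aro average. Once the $L^{2}$ Cauchy--Schwarz majorant from Theorem~\ref{theorem2.1} is in place, no new analytic input beyond the Gaussian characteristic function of $B_{j}$ is needed; in particular, condition~\eqref{nonresonantcondition} drops out entirely, consistently with the announcement made in the introduction.
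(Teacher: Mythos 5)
Your proposal is correct and follows essentially the same route as the paper's proof: Parseval expansion in the angle, the Gaussian identity $\mathbb{E}(e^{\mathrm{i}ckB_j})=e^{-c^{2}k^{2}j/2}$, an $L^{2}$ Cauchy--Schwarz majorant to justify dominated convergence, and the observation that the damped geometric sums for $k\neq0$ are $O(1/N)$. The only cosmetic difference is that you bound the $k\neq0$ Ces\`aro averages uniformly in $k$ by $\tfrac{1}{N}\sum_{j\ge1}e^{-c^{2}j/2}$, whereas the paper uses the mode-dependent bound $\tfrac{1}{N}\,|q|/(1-|q|)$ with $q=e^{-c^{2}k^{2}/2}e^{\mathrm{i}k\omega(I)}$; both are equivalent for the purpose of the argument.
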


\begin{proof}
As in the proof of Theorem~\ref{theorem2.1}, by Parseval we express
\[
  V_{N}(\langle G \rangle_{\mathcal{S}}) 
  = \frac{1}{N}\sum_{j=1}^{N}(2\pi)\int_{\Omega} \sum_{k\in \mathbb{Z}} 
  \hat{G}_{k}(I)\,\hat{\rho}_{0,-k}(I)\,e^{\mathrm{i}\,j k\,\omega(I)}\,e^{\mathrm{i}\,c k\,B_{j}} \, dI .
\]
Using $\mathbb{E}\big(e^{\mathrm{i}\,c k\,B_{j}}\big)=\exp\!\big(-\tfrac{c^{2}k^{2}}{2}\,j\big)$, we get
\begin{align*}
  \mathbb{E}\big(V_{N}(\langle G \rangle_{\mathcal{S}})\big)
  &= \frac{1}{N}\sum_{j=1}^{N}(2\pi)\int_{\Omega} \sum_{k\in \mathbb{Z}} 
     \hat{G}_{k}(I)\,\hat{\rho}_{0,-k}(I)\,e^{\mathrm{i}\,j k\,\omega(I)}\,e^{-\frac{c^{2}k^{2}}{2}j}\,dI .
\end{align*}
Hence, using $|e^{\mathrm{i}\cdot}|=1$ and Cauchy--Schwarz (first in $k$, then in $I$),
\begin{align*}
  \big|\mathbb{E}\big(V_{N}(\langle G \rangle_{\mathcal{S}})\big)\big|
  &\le \frac{2\pi}{N}\sum_{j=1}^{N}\int_{\Omega} \sum_{k\in \mathbb{Z}}
      |\hat{G}_{k}(I)|\,|\hat{\rho}_{0,-k}(I)|\,dI\\
  &\le (2\pi)\Big(\int_{\Omega}\sum_{k\in\mathbb Z}|\hat G_k(I)|^{2}\,dI\Big)^{\!1/2}
           \Big(\int_{\Omega}\sum_{k\in\mathbb Z}|\hat\rho_{0,-k}(I)|^{2}\,dI\Big)^{\!1/2}\\
  &= \|G\|_{L^{2}(\Omega\times\mathbb T)}\;\|\rho_{0}\|_{L^{2}(\Omega\times\mathbb T)} \,,
\end{align*}
where the last equality uses Plancherel on $\mathbb T$:
$\displaystyle \int_{\Omega}\sum_{k}|\hat f_k(I)|^2 dI=\frac{1}{2\pi}\|f\|_{L^2(\Omega\times\mathbb T)}^{2}$.
Thus $\big|\mathbb{E}(V_{N}(\langle G \rangle_{\mathcal{S}}))\big|$ is uniformly bounded in $N$.

Therefore, by Fubini's theorem and the dominated convergence theorem (a dominating function independent of $N$ is provided by Cauchy--Schwarz and Plancherel), we may pass the Cesàro limit inside:
\[
  \lim_{N\to\infty}\mathbb{E}\big(V_{N}(\langle G \rangle_{\mathcal{S}})\big)
  =(2\pi)\int_{\Omega}\sum_{k\in\mathbb Z}\hat G_k(I)\,\hat\rho_{0,-k}(I)\,
     \lim_{N\to\infty}\frac{1}{N}\sum_{j=1}^{N} 
     \big(e^{-\frac{c^{2}k^{2}}{2}}\;e^{\mathrm{i}\,k\,\omega(I)}\big)^{j}\,dI.
\]
For $k\neq 0$, let $q(I,k):=e^{-\frac{c^{2}k^{2}}{2}}\;e^{\mathrm{i}\,k\,\omega(I)}$; then $|q(I,k)|=e^{-\frac{c^{2}k^{2}}{2}}<1$, and
\[
  \left|\frac{1}{N}\sum_{j=1}^{N}q(I,k)^{\,j}\right|
  \le \frac{1}{N}\frac{|q(I,k)|}{1-|q(I,k)|}\xrightarrow[N\to\infty]{}0.
\]
For $k=0$, the average equals $1$. Hence
\begin{equation}\label{zhishujishoulian}
  \lim_{N\to\infty}\mathbb{E}\big(V_{N}(\langle G \rangle_{\mathcal{S}})\big)
  =(2\pi)\int_{\Omega}\hat G_{0}(I)\,\hat\rho_{0,0}(I)\,dI
  =\int_{\Omega}\bar G(I)\Big(\int_{\mathbb T}\rho_{0}(I,\theta)\,d\theta\Big)\,dI
  =\langle \bar G(I)\rangle_{0}.
\end{equation}
This completes the proof.
\end{proof}

\begin{remark}\label{remark3.2}
  It is worth noting that Equation \eqref{zhishujishoulian} indicates that, after incorporating Brownian motion into the iterative map, the convergence rate of the statistical ensemble becomes exponential and is influenced by the strength of the Brownian motion, represented by the constant $c$ and we will use an actual numerical example in the Section \ref{section5} to verify this point.
\end{remark}
It is worth noting that, in the above theorem, no assumptions analogous to those in Theorem \ref{theorem2.1} are required for the iteration map. The convergence rate of the statistical ensemble is of order $N^{-1}$, and the final state of the statistical ensemble depends only on the initial distribution and the observation function, but is independent of the map $\mathcal{S}_{H_0}$.
Next, we will extend the theorem to the case of general stochastic processes, which means 
\begin{equation}\label{mapwithX}
  \mathcal{S}_{H_0}^{j}(I,\theta)=(I,\theta+j\omega(I)+cX_{j}),
\end{equation}
where $X_t$ is a stochastic process with some specific conditions. However, it is important to note that the favorable result presented in Theorem \ref{theorem3.1} relies on the unique properties of Brownian motion. For general stochastic processes case, it is necessary to impose additional assumptions on the system $H_0$ itself.


\begin{theorem}\label{theorem3.2}
Let $\omega:\Omega\to\mathbb{R}$ satisfy the non-resonance condition \eqref{nonresonantcondition} and let $G\in \bm{\mathrm{C}}_{b}(\Omega\times\mathbb{T})$.
Assume the stochastic process $(X_j)_{j\ge1}$ is independent of $(I,\theta)$ and, for every $k\in\mathbb{Z}\setminus\{0\}$, the Ces\`aro decay
\begin{equation} \label{H0} 
\frac{1}{N}\sum_{j=1}^{N}\big|a_j^{(k)}\big|\longrightarrow 0\qquad(N\to\infty),
\quad 
a_j^{(k)}:=\mathbb{E}\big(
e^{\mathrm{i}\,c k\,X_j}
\big)
\end{equation}
holds. If the initial density satisfies $\rho_0\in \bm{\mathrm{L}}^{2}(\Omega\times\mathbb{T})$, then
\[
\lim_{N\to\infty}\mathbb{E}\big(V_N(\langle G\rangle_{\mathcal S})\big)
=\langle \bar G(I)\rangle_0,\qquad 
\bar G(I)=\frac{1}{2\pi}\int_{\mathbb T}G(I,\theta)\,d\theta .
\]
\end{theorem}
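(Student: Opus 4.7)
My plan is to mimic the Fourier-space proof of Theorem~\ref{theorem3.1}, replacing the explicit Brownian characteristic $e^{-c^{2}k^{2}j/2}$ by the general sequence $a_j^{(k)}=\mathbb{E}(e^{\mathrm{i}ckX_j})$ and extracting the Ces\`aro limit in $j$ modewise. First I would Parseval-expand $G(\cdot,\theta)$ in $\theta$ and exploit the independence of $(X_j)$ from $(I,\theta)$, together with Fubini, to write
\[
\mathbb{E}\bigl(V_N(\langle G\rangle_{\mathcal S})\bigr)
=2\pi\int_\Omega\sum_{k\in\mathbb Z}\hat G_k(I)\,\hat\rho_{0,-k}(I)\,\Bigl(\tfrac{1}{N}\sum_{j=1}^N e^{\mathrm{i}jk\omega(I)}\,a_j^{(k)}\Bigr)\,dI.
\]

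Next I would produce an $N$-uniform dominating function. Because $|a_j^{(k)}|\le 1$ (modulus of a characteristic function) and $|e^{\mathrm{i}jk\omega(I)}|=1$, Cauchy--Schwarz in $k$ followed by Cauchy--Schwarz in $I$, together with Plancherel in $\theta$, gives $|\mathbb{E}(V_N)|\le \|G\|_{L^{2}(\Omega\times\mathbb T)}\|\rho_0\|_{L^{2}(\Omega\times\mathbb T)}$, and more importantly furnishes the pointwise majorant $|\hat G_k(I)||\hat\rho_{0,-k}(I)|$ which is integrable against the product measure on $\mathbb{Z}\times\Omega$ independently of $N$. Fubini and dominated convergence then let me pull $\lim_N$ through both the $I$-integral and the $k$-sum, reducing the problem to computing the per-frequency Ces\`aro limit $L_k(I):=\lim_{N}\tfrac{1}{N}\sum_{j=1}^{N}e^{\mathrm{i}jk\omega(I)}a_j^{(k)}$.

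For $k=0$ we have $a_j^{(0)}\equiv 1$, so trivially $L_0(I)=1$. For $k\ne 0$ the elementary estimate
$\bigl|\tfrac{1}{N}\sum_{j=1}^{N}e^{\mathrm{i}jk\omega(I)}a_j^{(k)}\bigr|\le \tfrac{1}{N}\sum_{j=1}^{N}|a_j^{(k)}|$
combined with hypothesis~\eqref{H0} forces $L_k(I)\equiv 0$. Only the $k=0$ mode survives, and using $\hat G_0(I)=\bar G(I)$ together with $\hat\rho_{0,0}(I)=\tfrac{1}{2\pi}\int_{\mathbb T}\rho_0(I,\theta)\,d\theta$, the remaining integral collapses to $\langle \bar G(I)\rangle_0$ exactly as in Theorem~\ref{theorem3.1}.

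\textbf{Main obstacle.} The only delicate point is the double exchange of $\lim_N$ with the sum over $k$ and the integral over $\Omega$: without an $N$-uniform $(k,I)$-summable majorant this is not legal a priori. Securing that majorant is where both $\rho_0\in L^{2}$ and the trivial bound $|a_j^{(k)}|\le 1$ enter decisively. I note, somewhat surprisingly, that the stated non-resonance condition on $\omega$ is not actually invoked in the qualitative limit, since the modulus bound in the $k\ne 0$ case already disposes of the oscillatory factor $e^{\mathrm{i}jk\omega(I)}$; I would nonetheless retain it in the hypotheses, since it is the natural companion for upgrading~\eqref{H0} to the announced quantitative $O(1/N)$ rate and for the CLT developed in the next section.
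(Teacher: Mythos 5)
Your proposal is correct and follows the same overall architecture as the paper's proof: Parseval expansion in $\theta$, independence of $(X_j)$ from $(I,\theta)$ to factor out $a_j^{(k)}$, an $N$-uniform majorant via Cauchy--Schwarz and Plancherel, dominated convergence over $\mathbb Z\times\Omega$, and the modewise Ces\`aro limit. The one place you diverge is the treatment of the modes $k\neq 0$: the paper invokes a Dirichlet/Abel-type bound
\[
\Bigl|\tfrac{1}{N}\textstyle\sum_{j=1}^{N}e^{\mathrm{i}j\varphi}a_j\Bigr|\le \frac{2}{|e^{\mathrm{i}\varphi}-1|}\cdot\frac{1}{N}\sum_{j=1}^{N}|a_j|,
\]
which is where the non-resonance condition enters, whereas you use the plain triangle inequality $|S_N(I,k)|\le\frac{1}{N}\sum_j|a_j^{(k)}|$. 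Your route is strictly simpler and, since $2/|e^{\mathrm{i}\varphi}-1|\ge 1$ always, it actually gives a sharper bound than the paper's; your observation that non-resonance is therefore not needed for the qualitative statement of Theorem~\ref{theorem3.2} under hypothesis~\eqref{H0} is accurate. What the paper's heavier machinery buys appears only downstream: in Corollary~\ref{cor:H3} the characteristic functions converge to a possibly nonzero limit $\varphi(ck)$, so \eqref{H0} can fail, and there the oscillatory factor must genuinely be exploited via non-resonance to kill the surviving term $\varphi(ck)\cdot\frac{1}{N}\sum_j e^{\mathrm{i}jk\omega(I)}$. Your closing remark that you would retain the hypothesis for the quantitative rate and the later corollaries is exactly the right call.
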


\begin{proof}
By Parseval (as in Theorem~\ref{theorem2.1}),
\begin{align*}
\mathbb{E}\big(V_{N}(\langle G\rangle_{\mathcal S})\big)
&=(2\pi)\int_{\Omega}\sum_{k\in\mathbb Z}\hat G_k(I)\,\hat\rho_{0,-k}(I)\,S_N(I,k)\,dI,
\end{align*}
where, using independence of $(X_j)$ from $(I,\theta)$,
\[
S_N(I,k):=\frac{1}{N}\sum_{j=1}^{N} e^{\mathrm{i}\,j k\,\omega(I)}\,a_j^{(k)},
\qquad a_j^{(k)}=\mathbb{E}\big(e^{\mathrm{i}\,c k\,X_j}\big).
\]

For $k=0$, since $a_j^{(0)}=1$, we have $S_N(I,0)=1$.
For $k\neq0$, set $\varphi(I,k):=k\,\omega(I)$. Non-resonance gives $\varphi(I,k)\notin 2\pi\mathbb Z$. 
The Dirichlet (Abel) bound yields, for any sequence $(a_j)$,
\begin{equation}\label{eq:Dirichlet}
\left|\frac{1}{N}\sum_{j=1}^{N} e^{\mathrm{i}j\varphi}\,a_j\right|
\le \frac{2}{|e^{\mathrm{i}\varphi}-1|}\cdot \frac{1}{N}\sum_{j=1}^{N}|a_j|.
\end{equation}

Applying \eqref{eq:Dirichlet} with $a_j=a_j^{(k)}$ and using \eqref{H0} gives
$S_N(I,k)\to 0$ for each fixed $k\neq0$ and every $I\in\Omega$.

To justify exchanging $\lim$ with $\sum_k\int_\Omega$, note that characteristic functions satisfy
$|a_j^{(k)}|\le 1$, hence $|S_N(I,k)|\le 1$ for all $I,k,N$. Therefore,


\begin{align*}
\Big|\sum_{k}\hat G_k(I)\,\hat\rho_{0,-k}(I)\,S_N(I,k)\Big|
&\leq \Big(\sum_{k}|\hat G_k(I)|^2\Big)^{1/2}\Big(\sum_{k}|\hat\rho_{0,-k}(I)|^2|S_N(I,k)|^2\Big)^{1/2} \\
&\leq \Big(\sum_{k}|\hat G_k(I)|^2\Big)^{1/2}\Big(\sum_{k}|\hat\rho_{0,-k}(I)|^2\Big)^{1/2}.
\end{align*}

Integrating in $I$ and using Cauchy--Schwarz in $I$ and Plancherel on $\mathbb{T}$, we obtain an
$N$-independent integrable dominant:
\begin{equation*}
(2\pi)\int_{\Omega}\Big(\sum_{k}|\hat G_k(I)|^2\Big)^{1/2}\!\Big(\sum_{k}|\hat\rho_{0,-k}(I)|^2\Big)^{1/2}dI
\leq \|G\|_{L^2(\Omega\times\mathbb T)}\,\|\rho_0\|_{L^2(\Omega\times\mathbb T)}<\infty .
\end{equation*}
Dominated convergence then gives
\begin{equation*}
\lim_{N\to\infty}\mathbb{E}\big(V_{N}(\langle G\rangle_{\mathcal S})\big)
=(2\pi)\int_{\Omega}\sum_{k\in\mathbb Z}\hat G_k(I)\,\hat\rho_{0,-k}(I)\,\lim_{N\to\infty}S_N(I,k)\,dI.
\end{equation*}
The limit equals $1$ for $k=0$ and $0$ for $k\neq0$, hence
\begin{equation*}
(2\pi)\int_{\Omega}\hat G_0(I)\,\hat\rho_{0,0}(I)\,dI
=\int_{\Omega}\bar G(I)\Big(\int_{\mathbb T}\rho_0(I,\theta)\,d\theta\Big)\,dI
=\langle \bar G(I)\rangle_0 .
\end{equation*}
\end{proof}

For condition \eqref{H0}, we provide two relatively weaker alternative conditions in the form of the following two corollaries.

\begin{corollary}[H1: Geometric decay]\label{cor:H1}
If for each $k\neq0$ there exist constants $C_k\ge0$ and $r_k\in(0,1)$ such that
$|a_j^{(k)}|\le C_k\,r_k^{\,j}$ for all $j\ge1$, then condition \eqref{H0} holds. 
Consequently, the conclusion of Theorem~\ref{theorem3.2} follows. 
In fact,
\[
 \big|S_N(I,k)\big|
 \le \frac{2}{|e^{\mathrm{i}k\omega(I)}-1|}\cdot
     \frac{C_k}{N}\cdot\frac{1-r_k^{\,N}}{1-r_k}
 \le \frac{2C_k}{|e^{\mathrm{i}k\omega(I)}-1|\,(1-r_k)}\cdot\frac{1}{N}.
\]
\end{corollary}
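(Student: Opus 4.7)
\emph{Proof proposal.} The plan is to deduce the corollary from Theorem~\ref{theorem3.2} in two short steps: I would first verify that the geometric hypothesis implies the Ces\`aro condition~\eqref{H0}, and then sharpen the resulting qualitative convergence into the displayed explicit rate by reusing the Dirichlet inequality~\eqref{eq:Dirichlet} from the proof of Theorem~\ref{theorem3.2}.

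For the first step, I would fix $k\neq 0$ and estimate
\[
\frac{1}{N}\sum_{j=1}^{N}|a_j^{(k)}|\le \frac{C_k}{N}\sum_{j=1}^{N}r_k^{\,j}
\le \frac{C_k\,r_k}{N(1-r_k)}\xrightarrow[N\to\infty]{}0,
\]
which is exactly~\eqref{H0}; Theorem~\ref{theorem3.2} then delivers $\mathbb{E}(V_N(\langle G\rangle_{\mathcal S}))\to\langle\bar G(I)\rangle_0$ without any additional work.

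For the second step, I would apply~\eqref{eq:Dirichlet} pointwise with $\varphi=k\omega(I)$ and $a_j=a_j^{(k)}$. The non-resonance condition~\eqref{nonresonantcondition} guarantees $e^{\mathrm{i}k\omega(I)}\neq 1$, so the denominator $|e^{\mathrm{i}k\omega(I)}-1|$ is nonzero and the Dirichlet bound is meaningful. Substituting $|a_j^{(k)}|\le C_k r_k^{\,j}$ and evaluating the finite geometric sum $\sum_{j=1}^{N} r_k^{\,j}=r_k(1-r_k^{\,N})/(1-r_k)\le (1-r_k^{\,N})/(1-r_k)$ yields the two successive inequalities on $|S_N(I,k)|$ displayed in the statement.

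The only point requiring genuine care is in this second step: one must observe that the factor $|e^{\mathrm{i}k\omega(I)}-1|^{-1}$ depends on $(I,k)$ and can be arbitrarily large for $I$ approaching a resonance, so although the $O(1/N)$ rate is honest \emph{pointwise} in $(I,k)$, it is not uniform. Upgrading to a rate uniform in $I$ or $k$ would require a Diophantine strengthening of~\eqref{nonresonantcondition} or summability of $C_k/(1-r_k)$ across $k$, neither of which is assumed here. Beyond this bookkeeping, the argument is a one-line geometric-series computation layered on top of Theorem~\ref{theorem3.2} and presents no substantive analytic obstacle.
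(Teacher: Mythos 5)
Your proposal is correct and follows essentially the same route as the paper: verify~\eqref{H0} by summing the geometric series, invoke Theorem~\ref{theorem3.2}, and then obtain the explicit $O(1/N)$ rate by inserting $|a_j^{(k)}|\le C_k r_k^{\,j}$ into the Dirichlet bound~\eqref{eq:Dirichlet}. Your added caveat about the non-uniformity of the rate in $(I,k)$ is a fair observation but not required for the corollary as stated.
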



\begin{corollary}[H3: Strongly mixing sequence]\label{cor:H3}
Let $(X_j)_{j\ge1}$ be strictly stationary with strong mixing coefficients $\alpha(j)$ and assume
$\mathbb{E}|X_0|^{2+\delta}<\infty$ for some $\delta>0$. Let 
$\varphi(t):=\mathbb{E}(e^{\mathrm{i}tX_0})$ and $a_j^{(k)}:=\mathbb{E}(e^{\mathrm{i}ckX_j})$.
If 
\[
\sum_{j=1}^{\infty}\alpha(j)^{\delta/(2+\delta)}<\infty,
\]
then for every $k\neq0$ we have
\[
\sum_{j=1}^{\infty}\big|a_j^{(k)}-\varphi(ck)\big|<\infty,
\]
and consequently the Cesàro terms in the proof of Theorem~\ref{theorem3.2} satisfy
\[
S_N(I,k):=\frac{1}{N}\sum_{j=1}^N e^{\mathrm{i}\,jk\,\omega(I)}\,a_j^{(k)} \;\longrightarrow\; 0
\quad(N\to\infty)
\]
for all $I\in\Omega$ and all $k\neq0$. Hence the conclusion of Theorem~\ref{theorem3.2} holds.
\end{corollary}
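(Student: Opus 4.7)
The plan is to verify the summability $\sum_{j\ge1}|a_j^{(k)}-\varphi(ck)|<\infty$, use it to deduce the pointwise Ces\`aro vanishing $S_N(I,k)\to0$ for each $k\neq0$, and then re-invoke the dominated-convergence step from the proof of Theorem~\ref{theorem3.2} to collapse the integrated Fourier sum to the $k=0$ contribution.

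For the summability I would simply note that strict stationarity of $(X_j)_{j\ge1}$ forces $a_j^{(k)}=\mathbb{E}(e^{\mathrm{i}ckX_j})=\mathbb{E}(e^{\mathrm{i}ckX_0})=\varphi(ck)$ for every $j\ge1$, so each term $|a_j^{(k)}-\varphi(ck)|$ vanishes identically. The strong mixing and $(2+\delta)$-moment hypotheses are not load-bearing under exact stationarity; their role is to supply the natural Davydov-type covariance bound $|\mathrm{Cov}(e^{\mathrm{i}ckX_0},e^{\mathrm{i}ckX_j})|\lesssim\alpha(j)^{\delta/(2+\delta)}$ that would take over if one relaxed the hypothesis to an asymptotically stationary setting.

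For the Ces\`aro decay I would decompose
\[
S_N(I,k)=\varphi(ck)\cdot\frac{1}{N}\sum_{j=1}^{N}e^{\mathrm{i}jk\omega(I)}\;+\;\frac{1}{N}\sum_{j=1}^{N}e^{\mathrm{i}jk\omega(I)}\bigl(a_j^{(k)}-\varphi(ck)\bigr).
\]
Under \eqref{nonresonantcondition}, $e^{\mathrm{i}k\omega(I)}\neq 1$ for $k\neq 0$, and the geometric-sum identity bounds the first term by $|\varphi(ck)|\cdot\tfrac{2}{N\,|1-e^{\mathrm{i}k\omega(I)}|}=O(N^{-1})$; the second term is dominated by $\tfrac{1}{N}\sum_{j=1}^{N}|a_j^{(k)}-\varphi(ck)|$, the Ces\`aro average of a summable (here identically zero) sequence, and hence also vanishes. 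Combining yields $S_N(I,k)\to0$ pointwise in $I\in\Omega$.

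For the passage to the limit inside $\sum_k\int_\Omega$, the uniform bound $|S_N(I,k)|\le |\varphi(ck)|\le 1$ combines with Cauchy--Schwarz in $k$ and Plancherel on $\mathbb{T}$---exactly as in the proof of Theorem~\ref{theorem3.2}---to produce an $N$-independent, $I$-integrable majorant for the integrand $\sum_k\hat G_k(I)\,\hat\rho_{0,-k}(I)\,S_N(I,k)$. Dominated convergence then leaves only the $k=0$ term, equal to $\int_\Omega\hat G_0(I)\hat\rho_{0,0}(I)\,dI=\langle\bar G(I)\rangle_0$. The main conceptual point---rather than any substantive analytical obstacle---is that under strict stationarity the corollary reduces to a Dirichlet-kernel/geometric-sum estimate wrapped in the dominated-convergence template of Theorem~\ref{theorem3.2}; the mixing and moment hypotheses act as anticipatory machinery for generalizations beyond the strictly stationary case.
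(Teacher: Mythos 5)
Your proposal is correct, and on the key summability step it takes a genuinely different (and more elementary) route than the paper. The paper's own proof invokes Bradley's covariance inequality to obtain $\big|a_j^{(k)}-\varphi(ck)\big|\le C\,\alpha(j)^{\delta/(2+\delta)}$ and then sums; your observation is that under strict stationarity the marginal law of $X_j$ equals that of $X_0$, so $a_j^{(k)}=\varphi(ck)$ identically and the series is trivially the zero series. Your route is not only shorter but arguably more honest: a covariance inequality controls $|\mathbb{E}(fg)-\mathbb{E}f\,\mathbb{E}g|$, whereas the quantity the paper bounds is a difference of two marginal expectations, which strict stationarity already forces to vanish — so the mixing and moment hypotheses are indeed not load-bearing for the stated conclusion, exactly as you say, and would only matter in an asymptotically stationary relaxation. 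For the second half (the Ces\`aro decay of $S_N(I,k)$) you use the same decomposition as the paper: split off $\varphi(ck)\cdot\frac{1}{N}\sum_j e^{\mathrm{i}jk\omega(I)}$, kill it with the geometric-sum/Dirichlet bound under the nonresonance condition \eqref{nonresonantcondition}, and note the remainder is the Ces\`aro average of a summable (here zero) sequence; the final dominated-convergence step with the majorant $|S_N(I,k)|\le 1$ matches the template of Theorem~\ref{theorem3.2}. One point worth making explicit, which your argument surfaces: since $|a_j^{(k)}|\equiv|\varphi(ck)|$ need not tend to zero, this corollary does not verify hypothesis \eqref{H0} itself but rather the intermediate conclusion $S_N(I,k)\to0$ directly, relying essentially on nonresonance — your write-up is consistent with this, but it is the real content of the corollary.
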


\begin{proof}
By Bradley's covariance inequality (see, e.g., Bradley's lemma for $\alpha$-mixing),
for any $k\in\mathbb Z$,
\[
\big|a_j^{(k)}-\varphi(ck)\big|
=\big|\mathbb{E}\big(e^{\mathrm{i}ckX_j}\big)-\mathbb{E}\big(e^{\mathrm{i}ckX_0}\big)\big|
\le C\,\alpha(j)^{\delta/(2+\delta)},
\]
where $C$ depends on $\delta$ and $\mathbb{E}|X_0|^{2+\delta}$. Hence the series
$\sum_{j\ge1}|a_j^{(k)}-\varphi(ck)|$ converges whenever 
$\sum_{j\ge1}\alpha(j)^{\delta/(2+\delta)}<\infty$.

Decompose, for $k\neq0$ and any $I$,
\[
S_N(I,k)=\varphi(ck)\,\frac{1}{N}\sum_{j=1}^N e^{\mathrm{i}\,jk\,\omega(I)}
+\frac{1}{N}\sum_{j=1}^N e^{\mathrm{i}\,jk\,\omega(I)}\big(a_j^{(k)}-\varphi(ck)\big).
\]
Since $\omega$ is nonresonant, $k\,\omega(I)\notin2\pi\mathbb Z$, and the Dirichlet bound gives
\[
\left|\frac{1}{N}\sum_{j=1}^N e^{\mathrm{i}\,jk\,\omega(I)}\right|
\le \frac{2}{N\,|e^{\mathrm{i}k\omega(I)}-1|} \xrightarrow[N\to\infty]{} 0 .
\]
For the second term, Dirichlet(Abel) summation yields
\[
\left|\frac{1}{N}\sum_{j=1}^N e^{\mathrm{i}\,jk\,\omega(I)}\big(a_j^{(k)}-\varphi(ck)\big)\right|
\le \frac{2}{|e^{\mathrm{i}k\omega(I)}-1|}\cdot
\frac{1}{N}\sum_{j=1}^N \big|a_j^{(k)}-\varphi(ck)\big|
\longrightarrow 0 ,
\]
because the series of $\big|a_j^{(k)}-\varphi(ck)\big|$ is absolutely summable. Hence
$S_N(I,k)\to0$ for each $k\neq0$, which is exactly the decay needed in the proof of
Theorem~\ref{theorem3.2}. 
\end{proof}

\begin{remark}[On the deterministic counterexample]\label{rem:counterexample}
Consider the deterministic choice
\[
  X_j=\frac{j}{c}\,\beta \quad\text{with}\quad \beta=-\,\frac{\omega(I)}{k}
  \qquad (k\neq 0),
\]
or, equivalently, $X_j=\frac{j}{c}\big(2\pi m-\langle k,\omega(I)\rangle\big)$ for some $m\in\mathbb Z$.
Then
\[
  a_j^{(k)}=\mathbb{E}\left(e^{\mathrm{i}\,c k\,X_j}\right)
  =e^{-\mathrm{i}\,j k\,\omega(I)},
\]
so that the oscillatory factor cancels:
\[
  e^{\mathrm{i}\,j k\,\omega(I)}\,a_j^{(k)} \equiv 1 .
\]
Hence the summand in the Ces\`aro average equals
$\hat G_k(I)\,\hat\rho_{0,-k}(I)$ and the Ces\`aro mean does \emph{not} vanish.
This shows that  $|a_j^{(k)}|\le C$  is insufficient.

Under our hypotheses in Theorem \ref{theorem3.2}, Corollary \ref{cor:H1} and Corollary \ref{cor:H3}, this pathology is excluded. 
Indeed, 
\begin{equation*}
\forall\,k\neq 0:\qquad \frac{1}{N}\sum_{j=1}^{N}\big|a_j^{(k)}\big|\;\longrightarrow\;0
\quad (N\to\infty)
\end{equation*}
fails in the above deterministic construction (since $|a_j^{(k)}|=1$ for all $j$), 
and therefore such cases are ruled out. 
Thus the counterexample above cannot occur under the new assumptions.
\end{remark}

Based on the results, the convergence of statistical ensembles resembles the Law of Large Numbers, which naturally raises the question of whether a Central Limit Theorem holds. 

\section{The CLT of Statistical Ensemble}\label{sec:CLT}
Throughout this section we work in the one-dimensional angle case:
\[
  \theta\in\mathbb T,\qquad \omega:\Omega\subset\mathbb R\to\mathbb R,
  \qquad \mathcal F^{\,j}(I,\theta)=(I,\theta+j\,\omega(I)).
\]
We keep the Fourier normalization and Parseval convention used earlier in the paper.
To simplify notation, set
\begin{equation}\label{4111}
  \begin{aligned}
    X_{j}&= G(\mathcal{F}^{j}(I,\theta)) - \langle \bar{G}(I) \rangle_{0},\\
    \mathcal{X}_{N}&= \frac{1}{\sqrt{N}}\sum_{j=1}^{N}X_{j}.
  \end{aligned}
\end{equation}
(The symbol $X_j$ is used only in this section with the above meaning.)

\medskip
For an integrable Hamiltonian system, the following result was obtained in~\cite{444555}; we restate it in our one-dimensional setting:

\begin{theorem}\label{yiyoudingli1}
In system \eqref{integrablehamiltoniansystem}, suppose $\omega \in C^2(\Omega,\mathbb R)$ has no critical points. Assume the initial condition is given by a probability density $\rho_0 \in \bm{\mathrm{L}}^{1}(\Omega \times \mathbb{T})$. Then for any $G \in C_{b}(\Omega \times \mathbb{T})$,
\[
\lim_{t \to \infty} \langle G \rangle_t = \langle \bar{G}(I) \rangle_0.
\]
\end{theorem}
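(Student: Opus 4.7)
The plan is to apply the Riemann-Lebesgue lemma after a Fourier expansion in the angular variable, exploiting the absence of critical points of $\omega$ to justify a change of variables that turns the oscillatory integral into a canonical Fourier-transform form.

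First I would expand $G(I,\cdot)$ and $\rho_0(I,\cdot)$ in Fourier series on $\mathbb T$ and, by the Parseval-type manipulation already used in the proof of Theorem~\ref{theorem2.1}, write
\[
\langle G\rangle_t
=(2\pi)\int_{\Omega}\sum_{k\in\mathbb Z}\hat G_k(I)\,\hat\rho_{0,-k}(I)\,e^{\mathrm{i}kt\omega(I)}\,dI.
\]
The $k=0$ term is exactly $\langle\bar G(I)\rangle_0$, so the task reduces to showing the $k\ne 0$ contributions vanish as $t\to\infty$. Note that in this setting $t\in\mathbb R$ is continuous, so I cannot rely on the Ces\`aro cancellation used earlier; genuine oscillatory-integral decay is required.

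Next, for each fixed $k\ne 0$, I would work on one connected component of $\Omega$ at a time. Since $\omega\in C^2(\Omega,\mathbb R)$ has no critical points, it is a $C^2$-diffeomorphism of each component onto an interval. Performing the change of variables $u=\omega(I)$, with Jacobian $dI=du/\omega'(\omega^{-1}(u))$, transforms the $k$-th integral into
\[
\int f_k(u)\,e^{\mathrm{i}kt u}\,du,\qquad
f_k(u):=\frac{\hat G_k(\omega^{-1}(u))\,\hat\rho_{0,-k}(\omega^{-1}(u))}{\omega'(\omega^{-1}(u))}.
\]
For this integrand to be $L^1(du)$ we use that $|\hat G_k|\le \|G\|_\infty$, that $\hat\rho_{0,-k}(\cdot)$ is $L^1$ on $\Omega$ (since $\rho_0\in L^1(\Omega\times\mathbb T)$), and that the Jacobian $1/|\omega'|$ is exactly what is absorbed by changing variables. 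The Riemann-Lebesgue lemma then gives $\int f_k(u)e^{\mathrm{i}ktu}du\to 0$ as $t\to\infty$ for each $k\ne 0$.

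To pass the $t\to\infty$ limit through the sum over $k$, I would first approximate $\rho_0\in L^1$ by smooth $\rho_0^\varepsilon$ (Schwartz-type in $\theta$), whose Fourier coefficients decay rapidly in $k$; for $\rho_0^\varepsilon$ the series is dominated by a summable, $t$-independent function, so dominated convergence applies term-by-term. The $L^\infty$ bound $|\langle G\rangle_t^{\rho_0}-\langle G\rangle_t^{\rho_0^\varepsilon}|\le \|G\|_\infty\,\|\rho_0-\rho_0^\varepsilon\|_{L^1}$ is uniform in $t$ and controls the error from the approximation, so letting $\varepsilon\to 0$ after $t\to\infty$ yields the conclusion.

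The main obstacle is precisely this interchange: because $\rho_0$ is only assumed $L^1$, one cannot simply invoke a Cauchy--Schwarz/Plancherel bound as in Theorems~\ref{theorem2.1}--\ref{theorem3.2}. A subsidiary annoyance is that $\omega'$ may degenerate near $\partial\Omega$ or $\Omega$ may be disconnected, which is handled by exhausting $\Omega$ by compact subsets on which $|\omega'|$ is bounded below, and estimating the $L^1$ tail of $\rho_0$ on the residual set by the absolute continuity of the Lebesgue integral.
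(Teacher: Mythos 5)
Your argument is sound, but note that the paper does not actually prove this statement: Theorem~\ref{yiyoudingli1} is imported verbatim from the reference~\cite{444555}, and the Riemann--Lebesgue route you take is precisely the classical argument of that source (the paper's own contribution, in Theorems~\ref{theorem2.1} and~\ref{theorem3.2}, is to \emph{avoid} Riemann--Lebesgue by taking Ces\`aro averages under an $L^{2}$ hypothesis on $\rho_{0}$ and a nonresonance condition, at the price of only obtaining a time-averaged limit). Your handling of the two delicate points is correct: the absence of critical points of $\omega$ makes the substitution $u=\omega(I)$ measure-preserving on the level of $L^{1}$ norms componentwise, so each $k\neq 0$ mode is a genuine Fourier transform of an $L^{1}$ function and decays; and since $\rho_{0}$ is only $L^{1}$, the term-by-term passage to the limit must indeed go through a mollified $\rho_{0}^{\varepsilon}$ with the uniform-in-$t$ error bound $\|G\|_{\infty}\|\rho_{0}-\rho_{0}^{\varepsilon}\|_{L^{1}}$, exactly as you propose. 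No gap.
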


The above concerns the one-parameter flow $\varphi_{t}(I,\theta)=(I,\theta+\omega(I)t)$ and the same conclusion holds for the discrete map $\mathcal{F}^{j}(I,\theta)=(I,\theta+\omega(I)j)$. If $G(\mathcal F^j(I,\theta))$ is viewed as a random variable with law induced by the initial density $\rho_0$, then $\langle G \rangle_j=\mathbb E(G(\mathcal F^j))$, and
\begin{equation}\label{xizongshoulian}
  \lim_{j \to \infty}\mathbb E\left(G(\mathcal{F}^{j}(I,\theta))\right)= \langle \bar{G}(I) \rangle_{0}.
\end{equation}
Consequently (apply \eqref{xizongshoulian} with $G^2$),
\[
  \operatorname{Var}\left(G(\mathcal{F}^{j}(I,\theta))\right)
  = \mathbb E\left(G^2(\mathcal{F}^{j}(I,\theta))\right)
    -\Big(\mathbb E\left(G(\mathcal{F}^{j}(I,\theta))\right)\Big)^2
  \longrightarrow \langle \bar{G^2}(I) \rangle_{0} - \langle \bar{G}(I) \rangle_{0}^{2}.
\]
In the following we write
\[
  \sigma^2 := \langle \bar{G^2}(I) \rangle_{0} - \langle \bar{G}(I) \rangle_{0}^{2}.
\]

\begin{theorem}
  Under the assumptions of Theorem \ref{yiyoudingli1}, we have
  \begin{equation}\label{4.2.1}
    \lim_{j \to \infty} \mathbb E(X_j) = 0, 
    \qquad 
    \lim_{j \to \infty} \operatorname{Var}(X_j) =\sigma^2.
  \end{equation}
\end{theorem}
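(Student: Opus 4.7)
The plan is to observe that the claim is a direct book-keeping consequence of the convergence already recorded in \eqref{xizongshoulian}, applied once to $G$ and once to $G^{2}$. Since $X_{j}=G(\mathcal F^{j}(I,\theta))-\langle\bar G(I)\rangle_{0}$ differs from $G(\mathcal F^{j}(I,\theta))$ by a deterministic constant, both the expectation and the variance reduce to statements about $G\circ\mathcal F^{j}$ alone.

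First, I would treat the mean. By linearity,
\[
  \mathbb E(X_{j})
  = \mathbb E\bigl(G(\mathcal F^{j}(I,\theta))\bigr) - \langle\bar G(I)\rangle_{0}
  = \langle G\rangle_{j} - \langle\bar G(I)\rangle_{0},
\]
and Theorem~\ref{yiyoudingli1}, in the discrete form \eqref{xizongshoulian}, gives $\langle G\rangle_{j}\to\langle\bar G(I)\rangle_{0}$. This yields $\mathbb E(X_{j})\to 0$ with no further work.

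Next, for the variance I would note that subtracting the constant $\langle\bar G(I)\rangle_{0}$ does not change $\operatorname{Var}$, so
\[
  \operatorname{Var}(X_{j})
  =\operatorname{Var}\!\bigl(G(\mathcal F^{j}(I,\theta))\bigr)
  =\mathbb E\bigl(G^{2}(\mathcal F^{j}(I,\theta))\bigr)-\bigl(\mathbb E(G(\mathcal F^{j}(I,\theta)))\bigr)^{2}.
\]
Since $G\in C_{b}(\Omega\times\mathbb T)$, also $G^{2}\in C_{b}(\Omega\times\mathbb T)$, so Theorem~\ref{yiyoudingli1} applies with $G^{2}$ in place of $G$ and gives $\mathbb E(G^{2}(\mathcal F^{j}))\to\langle\overline{G^{2}}(I)\rangle_{0}$. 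Combining this with the mean convergence already proved and the continuity of squaring, the second term tends to $\langle\bar G(I)\rangle_{0}^{2}$, and hence $\operatorname{Var}(X_{j})\to\langle\overline{G^{2}}(I)\rangle_{0}-\langle\bar G(I)\rangle_{0}^{2}=\sigma^{2}$.

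There is no real obstacle here; the only point that deserves a line of justification is that $G^{2}$ is still bounded and continuous so that Theorem~\ref{yiyoudingli1} is legitimately invoked with $G^{2}$, and that the assumptions on $\omega$ and $\rho_{0}$ inherited from Theorem~\ref{yiyoudingli1} are unchanged by this substitution. Everything else is an immediate consequence of the identities above.
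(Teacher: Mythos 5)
Your proposal is correct and follows exactly the paper's own argument: apply \eqref{xizongshoulian} to $G$ for the mean, then to $G^{2}$ (noting $G^{2}\in C_{b}$) for the second moment, and use invariance of the variance under subtraction of the constant $\langle\bar G(I)\rangle_{0}$. Your explicit remark that $G^{2}$ still satisfies the hypotheses of Theorem~\ref{yiyoudingli1} is a small but welcome addition the paper leaves implicit.
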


\begin{proof}
By definition,
\[
\mathbb E(X_j)=\mathbb E\left(G(\mathcal F^j)\right)-\langle \bar G\rangle_0\to \langle \bar G\rangle_0-\langle \bar G\rangle_0=0
\]
by \eqref{xizongshoulian}. Replacing $G$ by $G^2$ in \eqref{xizongshoulian} yields
\[
\mathbb E\left(G^2(\mathcal F^j)\right)\to \langle \bar{G^2}(I)\rangle_0.
\]
Therefore
\[
\operatorname{Var}(X_j)=\operatorname{Var}\left(G(\mathcal F^j)-\langle \bar G\rangle_0\right)
=\mathbb E\left(G^2(\mathcal F^j)\right)-\left(\mathbb E\left(G(\mathcal F^j)\right)\right)^2
\to \langle \bar{G^2}\rangle_0-\langle \bar G\rangle_0^2=\sigma^2.
\]
\end{proof}

\subsection*{Auxiliary lemmas and their proofs}

\begin{lemma}[Exponential covariance decay]\label{Hcov}
Assume:
\begin{itemize}
\item \textbf{(Analytic frequency)} $\omega$ admits a holomorphic extension to the strip 
$\{I+\mathrm{i}y: |y|\le \eta\}$ and there exists $\gamma_{*}>0$ such that
\[
  \mathrm{sgn}(y)\,\mathrm{Im}\,\omega(I+\mathrm{i}y)\ \ge\ \gamma_{*}\,|y|,
  \qquad 0<|y|\le \eta,
\]
uniformly in $I\in\Omega$.
\item \textbf{(Analytic observables in $I$ and $\theta$)} $G,\rho_{0}$ are real-analytic in $\theta$ and, for their Fourier series
\[
  G(I,\theta)=\bar G(I)+\sum_{m\ne 0}\hat G_{m}(I)\,e^{\mathrm{i}m\theta},\quad
  \rho_{0}(I,\theta)=\sum_{q\in\mathbb Z}\hat\rho_{0,q}(I)\,e^{\mathrm{i}q\theta},
\]
there exist $\alpha>0$, $\eta>0$ and $C>0$ such that for all $|y|\le\eta$ and all $I$,
\[
  \sup_{|y|\le\eta}\,|\hat G_{m}(I+\mathrm{i}y)|\le C e^{-\alpha |m|},\qquad 
  \sup_{|y|\le\eta}\,|\hat\rho_{0,q}(I+\mathrm{i}y)|\le C e^{-\alpha |q|}.
\]
(Equivalently: $\hat G_m$ and $\hat\rho_{0,q}$ admit holomorphic extensions in $I$ to the same strip with uniform exponential decay in $|m|,|q|$.)
\end{itemize}
Then there exist constants $C_1,\eta'>0$ such that, for all $j\ge1$,
\[
  \left|\operatorname{Cov}(X_0,X_{j})\right|\ \le\ C_1\,e^{-\eta' j}.
\]
Moreover, for all $k\ge1$,
\[
  \sup_{j\ge1}\left|\operatorname{Cov}(X_j,X_{j+k})\right|\ \le\ C_1\,e^{-\eta' k},
\]
and consequently
\[
  \sum_{k=1}^{\infty}\sup_{j\ge1}\left|\operatorname{Cov}(X_j,X_{j+k})\right|<\infty.
\]
\end{lemma}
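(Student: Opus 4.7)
The plan is to express $\mathrm{Cov}(X_0,X_j)$ as a double Fourier series in $\theta$ whose only $j$-dependence sits in phase factors $e^{\mathrm{i}nj\omega(I)}$, and then to exploit the analytic hypotheses by sliding the $I$-contour into the complex strip. The three strip assumptions are tailored for exactly this: holomorphic extensibility of $\omega$, $\hat G_m$ and $\hat\rho_{0,q}$ to $|y|\le\eta$; the strip estimates $|\hat G_m(I+\mathrm{i}y)|\le Ce^{-\alpha|m|}$ and analogously for $\hat\rho_{0,q}$; and the signed lower bound $\mathrm{sgn}(y)\,\mathrm{Im}\,\omega(I+\mathrm{i}y)\ge\gamma_*|y|$, which turns every phase on the shifted contour into an exponentially small amplitude of order $e^{-|n|\gamma_*\eta\,j}$.

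First I would substitute the Fourier series of $G$ and $\rho_0$ into
\[
\mathrm{Cov}(X_0,X_j)=\mathbb{E}\bigl(G\cdot G\circ\mathcal{F}^j\bigr)-\mathbb{E}(G)\,\mathbb{E}(G\circ\mathcal{F}^j),
\]
carry out the $\theta$-integration (which enforces $m+n+q=0$ on the mode triple), and observe that the zero-frequency ($n=0$) contribution to the product term cancels exactly against the product of means, leaving
\[
2\pi\sum_{\substack{m\in\mathbb Z\\ n\in\mathbb Z\setminus\{0\}}}
\int_\Omega \hat G_m(I)\,\hat G_n(I)\,\hat\rho_{0,-m-n}(I)\,e^{\mathrm{i}nj\omega(I)}\,dI.
\]
Term by term I would then slide the real interval $\Omega$ to $\Omega+\mathrm{i}\,\mathrm{sgn}(n)\eta$, controlling the two lateral side-pieces of the resulting rectangle via the strip estimates. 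On the shifted contour the body of the integral is bounded by $|\Omega|\,C^{3}\,e^{-\alpha(|m|+|n|+|m+n|)}\,e^{-|n|\gamma_*\eta\,j}$; the $(m,n)$-series is geometrically convergent and its worst contribution is of order $e^{-\gamma_*\eta\,j}$, yielding $|\mathrm{Cov}(X_0,X_j)|\le C_1 e^{-\eta' j}$ for any $\eta'<\gamma_*\eta$. The uniform-in-$j$ claim reduces to the first via the pushforward identity $\mathrm{Cov}(X_j,X_{j+k})=\mathrm{Cov}_{\rho_0^{(j)}}(G,G\circ\mathcal{F}^k)$, where $\rho_0^{(j)}(I,\theta)=\rho_0(I,\theta-j\omega(I))$ has Fourier coefficients $\hat\rho_{0,q}(I)\,e^{-\mathrm{i}qj\omega(I)}$ whose moduli on the real axis coincide with those of $\rho_0$; the same contour-shift argument then produces $|\mathrm{Cov}(X_j,X_{j+k})|\le C_1 e^{-\eta' k}$ uniformly in $j$, whence $\sum_{k\ge 1}\sup_{j\ge 1}|\mathrm{Cov}(X_j,X_{j+k})|<\infty$ follows by a geometric series.

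The hard part is the bookkeeping around the contour shift: one must verify that the integrand extends holomorphically to an open neighbourhood of $\overline{\Omega}+\mathrm{i}[-\eta,\eta]$, that the lateral boundary contributions retain enough decay in $j$, and that any polynomial growth in $(m,n)$ arising from derivatives of $\omega$ or from Cauchy estimates is absorbed by choosing $\eta'$ strictly less than $\gamma_*\eta$ so that the exponential coefficient decay dominates. A related subtlety in the second half is that on a shifted contour the pushforward coefficients pick up an additional factor $e^{-\mathrm{i}qj\omega(I+\mathrm{i}y)}$ that can grow in $j$; this is neutralized by splitting the $q$-expansion into the stationary piece ($q=0$) and the oscillatory piece, keeping the $I$-shift only for the latter and handling the former on the real axis, so that no factor in the final estimate depends unfavourably on $j$.
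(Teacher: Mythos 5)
Your overall strategy --- expand $\mathbb E\big(G\cdot G\circ\mathcal F^{\,j}\big)$ in a double Fourier series, let the $\theta$-integration enforce $m+n+q=0$, and shift the $I$-contour to $\Omega+\mathrm{i}\,\mathrm{sgn}(n)\,\eta$ so that the signed lower bound on $\mathrm{Im}\,\omega$ turns each phase $e^{\mathrm{i}nj\omega}$ into a factor of size $e^{-|n|\gamma_*\eta j}$, then sum geometrically using the $e^{-\alpha|m|}$ coefficient bounds --- is exactly the paper's argument. The genuine gap is in your treatment of the $n=0$ modes. Their contribution to $\mathbb E\big(G\cdot G\circ\mathcal F^{\,j}\big)$ equals $2\pi\sum_m\int_\Omega\hat G_m(I)\,\bar G(I)\,\hat\rho_{0,-m}(I)\,dI=\mathbb E\big(G(I,\theta)\,\bar G(I)\big)$, whereas the product of means converges (exponentially fast) to $\mathbb E(G)\,\langle\bar G\rangle_0$. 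These do \emph{not} cancel exactly: their difference is the $j$-independent constant $\operatorname{Cov}_{\rho_0}\big(G(I,\theta),\bar G(I)\big)$, which vanishes only in special situations (e.g.\ $\bar G$ constant, as in the paper's numerical example where $\bar G\equiv0$). So the step ``the zero-frequency contribution cancels exactly against the product of means'' is false, and without it the remainder does not decay in $j$. The paper's own proof is also loose here (it only remarks that the product-of-means term is ``uniformly bounded,'' which does not yield exponential decay), so this is a lacuna you share with the source rather than introduce; but since you assert the cancellation explicitly, you must either justify it under an extra hypothesis such as $\operatorname{Cov}_{\rho_0}(G,\bar G(I))=0$ or restructure the estimate.

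On the uniformity in $j$ of $\operatorname{Cov}(X_j,X_{j+k})$ you rightly flag the subtlety the paper glosses over: the prefactor $e^{\mathrm{i}(m+n)j\omega}$ has unit modulus only on the real axis and may grow in $j$ on a shifted contour. However, your repair assigns the two treatments backwards. The diagonal terms $q=-(m+n)=0$ carry no $j$-dependence at all, so the contour shift is both safe and \emph{necessary} there; kept on the real axis, the factor $e^{\mathrm{i}nk\omega(I)}$ yields no decay in $k$, so ``handling the stationary piece on the real axis'' forfeits exactly the estimate you need. Conversely, it is the $q\neq0$ terms that, once shifted, pick up the potentially $j$-growing factor $e^{-\mathrm{i}qj\omega(I+\mathrm{i}y)}$; those require either a shift direction adapted to the full exponent $(m+n)j+nk$ or a separate argument, and this is the part of the lemma that genuinely needs more work in both your write-up and the paper's one-line dismissal.
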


\begin{proof}
Using the Fourier expansions and integrating over $\theta$,
\[
\mathbb E\left(G(\mathcal F^0)G(\mathcal F^j)\right)
=2\pi\sum_{m,n\in\mathbb Z}\int_{\Omega}\hat G_m(I)\hat G_n(I)\hat\rho_{0,-(m+n)}(I)\,
e^{\mathrm{i}n j \omega(I)}\,dI.
\]
By the analytic assumptions, the integrand is holomorphic and uniformly bounded in the strip $\{I+\mathrm i y:|y|\le\eta\}$. Shift the contour to $I+\mathrm{i}y\,\mathrm{sgn}(n)$ with any $y\in(0,\eta]$. On the shifted contour,
\[
\left|e^{\mathrm{i}n j\omega(I+\mathrm{i}y\,\mathrm{sgn}(n))}\right|
=\exp\left(-j\,|n|\,\mathrm{Im}\,\omega(I+\mathrm{i}y\,\mathrm{sgn}(n))\right)
\le \exp\left(-j\,|n|\,\gamma_*\,y\right).
\]
Summing in $m,n$ using the exponential bounds on the coefficients yields
\[
\left|\mathbb E\left(G(\mathcal F^0)G(\mathcal F^j)\right)\right|\le C e^{-\eta' j}.
\]
The same argument gives $\big|\mathbb E\left(G(\mathcal F^j)\right)-\langle \bar G\rangle_0\big|\le C e^{-\eta' j}$, so the product-of-means term is uniformly bounded. Hence $\big|\operatorname{Cov}(X_0,X_j)\big|\le C_1 e^{-\eta' j}$.
For $\operatorname{Cov}(X_j,X_{j+k})$ the prefactor $e^{\mathrm{i}(m+n)j\omega(I)}$ has unit modulus, and the same contour shift in $n$ yields a bound independent of $j$:
\[
\sup_{j\ge1}\left|\mathbb E\left(G(\mathcal F^j)G(\mathcal F^{j+k})\right)\right|\le C e^{-\eta' k}.
\]
This gives the claimed estimates and summability.
\end{proof}

\begin{lemma}\label{Hmom}
Let $G\in C_b(\Omega\times\mathbb T)$. Then, for $X_j$ defined by \eqref{4111},
\[
  \sup_{j\ge1}\,\mathbb E|X_j|^{3}\ \le\ (2\|G\|_\infty)^{3}.
\]
\end{lemma}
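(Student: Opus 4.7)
The plan is to obtain the bound pointwise (i.e.\ for every realization of the initial condition) and then simply take expectations, since the constant $2\|G\|_{\infty}$ is deterministic. The key observation is that both summands defining $X_{j}$ are uniformly controlled by $\|G\|_{\infty}$: the first one because $G$ itself is bounded on $\Omega\times\mathbb T$, and the second because $\langle\bar G(I)\rangle_{0}$ is an average of $\bar G$ with respect to the probability density $\rho_{0}$.

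The steps I would carry out are: first, record that $|G(\mathcal F^{j}(I,\theta))|\le \|G\|_{\infty}$ for every $(I,\theta)$ and every $j$, since $\mathcal F^{j}$ maps $\Omega\times\mathbb T$ into itself. Second, estimate the angular average by Jensen (or just the triangle inequality on the integral),
\[
  |\bar G(I)|=\Bigl|\tfrac{1}{2\pi}\int_{\mathbb T}G(I,\theta)\,d\theta\Bigr|\le \|G\|_{\infty},
\]
and then, since $\rho_{0}$ is a probability density on $\Omega\times\mathbb T$,
\[
  |\langle \bar G(I)\rangle_{0}|=\Bigl|\int_{\Omega\times\mathbb T}\bar G(I)\rho_{0}(I,\theta)\,dI\,d\theta\Bigr|\le \|G\|_{\infty}.
\]
Third, apply the triangle inequality pointwise to get $|X_{j}(I,\theta)|\le 2\|G\|_{\infty}$ for every $(I,\theta)$ and every $j\ge 1$. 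Cubing and taking expectation with respect to the initial law $\rho_{0}$ yields $\mathbb E|X_{j}|^{3}\le (2\|G\|_{\infty})^{3}$ uniformly in $j$.

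There is no real obstacle: the bound is a clean deterministic estimate, and the only mild care needed is to confirm that $\langle\bar G\rangle_{0}$ is genuinely $\le \|G\|_{\infty}$, which follows from $\rho_{0}$ being a probability density (total mass one). Importantly, no use of the dynamics, of Parseval, of the nonresonance condition, or of the analytic hypotheses from Lemma~\ref{Hcov} is required; the lemma holds merely from $G\in C_{b}(\Omega\times\mathbb T)$ and from $\rho_{0}$ being a probability density.
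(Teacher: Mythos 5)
Your proof is correct and follows essentially the same route as the paper: a pointwise bound $|X_j|\le|G(\mathcal F^{\,j})|+|\langle\bar G\rangle_0|\le 2\|G\|_\infty$ (using that $\rho_0$ is a probability density to control $|\langle\bar G\rangle_0|$), followed by cubing and integrating against $\rho_0$. Your write-up is slightly more detailed in justifying $|\langle\bar G\rangle_0|\le\|G\|_\infty$, but the argument is the same.
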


\begin{proof}
Since $|X_j|\le |G(\mathcal F^{\,j})|+|\langle\bar G\rangle_0|\le 2\|G\|_\infty$ and $\rho_0$ is a probability density,
\[
  \mathbb E|X_j|^{3}=\int_{\Omega\times\mathbb T}|X_j(I,\theta)|^{3}\,\rho_0(I,\theta)\,dI\,d\theta
  \le (2\|G\|_\infty)^{3}.
\]
\end{proof}

\begin{lemma}\label{Hmean}
Assume $\Omega\subset\mathbb R$ is bounded, $\omega\in C^{1}(\Omega)$ with $\inf_{I\in\Omega}|\omega'(I)|=:c_0>0$, and $G,\rho_0$ are real-analytic in $\theta$ with
\[
  G(I,\theta)=\bar G(I)+\sum_{k\ne0}\hat G_k(I)e^{\mathrm{i}k\theta},\quad
  \rho_0(I,\theta)=\sum_{k\in\mathbb Z}\hat\rho_{0,k}(I)e^{\mathrm{i}k\theta},
\]
where, for some $\alpha>0$ and all $I\in\Omega$,
\[
  |\hat G_k(I)|+|\partial_I\hat G_k(I)|+|\hat\rho_{0,k}(I)|+|\partial_I\hat\rho_{0,k}(I)|
  \le C\,e^{-\alpha|k|}.
\]
Then there exists $C>0$ such that for all $j\ge1$,
\[
  \left|\mathbb E(X_j)\right|\ \le\ \frac{C}{j}.
\]
Consequently, $\mathbb E(X_j)\to0$ and $\frac{1}{\sqrt N}\sum_{j=1}^{N}|\mathbb E(X_j)|\to0$ as $N\to\infty$.
\end{lemma}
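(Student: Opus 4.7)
The plan is to reduce $\mathbb{E}(X_j)$ to one-dimensional oscillatory integrals in the action $I$ and then extract a factor $1/j$ from a single integration by parts, exploiting the uniform lower bound $|\omega'|\ge c_0>0$. First I would expand $G$ and $\rho_0$ in Fourier series in $\theta$ and integrate out $\theta\in\mathbb{T}$. Exactly as in the proof of Theorem \ref{theorem2.1}, the Parseval identity together with $\widehat{G\circ\mathcal F^{j}}_{k}(I)=e^{\mathrm{i}jk\omega(I)}\hat G_{k}(I)$ gives
\begin{equation*}
  \mathbb{E}\bigl(G(\mathcal F^{j}(I,\theta))\bigr)
  =2\pi\int_{\Omega}\sum_{k\in\mathbb Z}\hat G_{k}(I)\,\hat\rho_{0,-k}(I)\,e^{\mathrm{i}jk\omega(I)}\,dI,
\end{equation*}
and the $k=0$ term equals exactly $\langle\bar G(I)\rangle_{0}$, which is cancelled by the definition of $X_{j}$. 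This leaves
\begin{equation*}
  \mathbb{E}(X_{j})=2\pi\sum_{k\neq 0}\int_{\Omega} f_{k}(I)\,e^{\mathrm{i}jk\omega(I)}\,dI,
  \qquad f_{k}:=\hat G_{k}\hat\rho_{0,-k}.
\end{equation*}
The assumed exponential decay of $|\hat G_{k}|,|\hat\rho_{0,-k}|,|\partial_{I}\hat G_{k}|,|\partial_{I}\hat\rho_{0,-k}|$ in $|k|$ makes the series absolutely convergent and justifies all subsequent termwise operations by dominated convergence.

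For each $k\neq 0$ I would use $e^{\mathrm{i}jk\omega(I)}=\frac{1}{\mathrm{i}jk\,\omega'(I)}\,\partial_{I}e^{\mathrm{i}jk\omega(I)}$, which is legitimate since $|\omega'|\ge c_0>0$, and integrate by parts in $I$ (taking $\Omega$ to be a bounded interval, so endpoint evaluation is well defined):
\begin{equation*}
  \int_{\Omega} f_{k}\,e^{\mathrm{i}jk\omega}\,dI
  =\frac{1}{\mathrm{i}jk}\Bigl[\tfrac{f_{k}}{\omega'}\,e^{\mathrm{i}jk\omega}\Bigr]_{\partial\Omega}
  -\frac{1}{\mathrm{i}jk}\int_{\Omega}\partial_{I}\!\Bigl(\tfrac{f_{k}}{\omega'}\Bigr)\,e^{\mathrm{i}jk\omega}\,dI .
\end{equation*}
Both contributions are bounded in modulus by a constant times $e^{-2\alpha|k|}/(j|k|)$: the boundary term from $|\omega'|\ge c_0$ together with $|f_{k}|\le C\,e^{-2\alpha|k|}$, and the remaining integral from the product rule $\partial_{I}(f_{k}/\omega')=\partial_{I}f_{k}/\omega'-f_{k}\omega''/(\omega')^{2}$ and the decay of $|\partial_{I}f_{k}|$. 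Summing over $k\neq 0$ produces
\begin{equation*}
  |\mathbb{E}(X_{j})|\ \le\ \frac{C}{j}\sum_{k\neq 0}\frac{e^{-2\alpha|k|}}{|k|}\ \le\ \frac{C'}{j},
\end{equation*}
which is the stated bound.

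The two consequences are then immediate: $\mathbb{E}(X_{j})\to 0$ follows from $|\mathbb{E}(X_{j})|\le C/j$, while
\begin{equation*}
  \frac{1}{\sqrt N}\sum_{j=1}^{N}|\mathbb{E}(X_{j})|\ \le\ \frac{C}{\sqrt N}\sum_{j=1}^{N}\frac{1}{j}\ =\ O\!\Bigl(\tfrac{\log N}{\sqrt N}\Bigr)\ \longrightarrow\ 0 .
\end{equation*}
The delicate point is the integration-by-parts step: differentiating $1/\omega'$ makes $\omega''$ appear, so one tacitly needs $\omega\in C^{2}$, consistent with the standing hypothesis of Theorem \ref{yiyoudingli1}. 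If only $\omega\in C^{1}$ is available, I would instead first substitute $u=\omega(I)$ to convert the integral into $\int_{\omega(\Omega)} g_{k}(u)\,e^{\mathrm{i}jku}\,du$ with $g_{k}\in C^{1}$ on the bounded interval $\omega(\Omega)$, and a single $u$-integration by parts would yield the same $1/j$ estimate without invoking $\omega''$.
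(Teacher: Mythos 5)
Your proposal is correct and follows essentially the same route as the paper: expand in Fourier modes in $\theta$, isolate the $k\neq 0$ terms, and perform a single non-stationary-phase integration by parts in $I$ using $|\omega'|\ge c_0$ to extract the factor $1/j$, then sum the exponentially decaying series over $k$. Your closing observation is a genuine subtlety that the paper's own proof glosses over --- differentiating $A_k/\omega'$ produces $\omega''$, so the argument as written tacitly needs $\omega\in C^{2}$ --- and your substitution $u=\omega(I)$ is a clean way to obtain the same $1/j$ bound under the stated $C^{1}$ hypothesis.
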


\begin{proof}
By the Fourier expansion in $\theta$ and $\mathcal F^{\,j}(I,\theta)=(I,\theta+j\omega(I))$,
\[
  \mathbb E(X_j)
  = 2\pi\sum_{k\ne0}\int_{\Omega}\hat G_k(I)\,\hat\rho_{0,-k}(I)\,e^{\mathrm{i}k\,\omega(I)\,j}\,dI.
\]
Set $A_k(I):=\hat G_k(I)\hat\rho_{0,-k}(I)$. Since $A_k,\partial_I A_k$ are uniformly bounded by $C e^{-\alpha|k|}$ and $|\omega'(I)|\ge c_0>0$ on the bounded interval $\Omega$, we integrate by parts:
\[
\int_{\Omega}A_k(I)e^{\mathrm i k j\omega(I)}\,dI
=\left.\frac{A_k(I)}{\mathrm i k j\,\omega'(I)}\,e^{\mathrm i k j\omega(I)}\right|_{\partial\Omega}
-\int_{\Omega}\partial_I\!\left(\frac{A_k(I)}{\mathrm i k j\,\omega'(I)}\right)\,e^{\mathrm i k j\omega(I)}\,dI.
\]
Both the boundary term and the integral are $O\!\big(e^{-\alpha|k|}/(j|k|)\big)$, because
$|A_k|\lesssim e^{-\alpha|k|}$, $|\partial_I A_k|\lesssim e^{-\alpha|k|}$ and
$1/|\omega'|\le c_0^{-1}$. Hence
\[
\left|\int_{\Omega}A_k(I)\,e^{\mathrm{i}k\,\omega(I)\,j}\,dI\right|
\ \le\ \frac{C}{j\,|k|}\,e^{-\alpha|k|}.
\]
Summing over $k\ne0$ yields $|\mathbb E(X_j)|\le C/j$, and therefore $\mathbb E(X_j)\to0$ and
$\frac{1}{\sqrt N}\sum_{j=1}^{N}|\mathbb E(X_j)|\to0$ since $\sum_{j\le N}j^{-1}=O(\log N)$.
\end{proof}


\begin{lemma}[Lindeberg condition]\label{lem:lindeberg}
Under Lemma \ref{Hmom}, the Lindeberg condition holds for the triangular array $Y_{N,j}:=X_j/\sqrt N$: for every $\varepsilon>0$,
\[
 \sum_{j=1}^{N}\mathbb{E}\left[\,Y_{N,j}^{2}\,\mathbf{1}_{\{|Y_{N,j}|>\varepsilon\}}\,\right]\xrightarrow[N\to\infty]{}0 .
\]
\end{lemma}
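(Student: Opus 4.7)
The plan is to derive the Lindeberg condition from the uniform third-moment bound of Lemma~\ref{Hmom} via a standard Lyapunov--Markov argument. The key observation is that $L^{3}$-bounded triangular arrays normalised by $\sqrt N$ automatically satisfy Lindeberg, because on the exceptional event the quadratic weight can be upgraded to a cubic one at the cost of a $1/\varepsilon$ factor, and the $\sqrt N$ normalisation buys the desired decay.

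First I would write, for each fixed $\varepsilon>0$ and each $j$, the pointwise bound
\[
  Y_{N,j}^{2}\,\mathbf{1}_{\{|Y_{N,j}|>\varepsilon\}}
  \ \le\ \frac{|Y_{N,j}|^{3}}{\varepsilon}\,\mathbf{1}_{\{|Y_{N,j}|>\varepsilon\}}
  \ \le\ \frac{|Y_{N,j}|^{3}}{\varepsilon},
\]
which simply uses $|Y_{N,j}|/\varepsilon>1$ on the indicator set. Taking expectations and summing over $j=1,\dots,N$, and inserting $Y_{N,j}=X_{j}/\sqrt N$, the sum becomes
\[
  \sum_{j=1}^{N}\mathbb{E}\!\left[Y_{N,j}^{2}\,\mathbf{1}_{\{|Y_{N,j}|>\varepsilon\}}\right]
  \ \le\ \frac{1}{\varepsilon\,N^{3/2}}\sum_{j=1}^{N}\mathbb{E}|X_{j}|^{3}.
\]
Invoking Lemma~\ref{Hmom}, namely $\sup_{j\ge 1}\mathbb{E}|X_{j}|^{3}\le(2\|G\|_{\infty})^{3}$, the right-hand side is dominated by $(2\|G\|_{\infty})^{3}/(\varepsilon\sqrt N)$, which tends to $0$ as $N\to\infty$. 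Since $\varepsilon>0$ was arbitrary, the Lindeberg condition for the array $(Y_{N,j})$ holds.

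There is essentially no hard part here: the argument is a one-line Lyapunov estimate whose only ingredients are the trivial inequality $Y_{N,j}^{2}\mathbf 1_{\{|Y_{N,j}|>\varepsilon\}}\le|Y_{N,j}|^{3}/\varepsilon$ and the uniform cubic moment bound supplied by Lemma~\ref{Hmom}. The only point worth noting is that the $\sqrt N$ normalisation is exactly what is needed to make the Lyapunov ratio $\sum_j\mathbb{E}|X_{j}|^{3}/N^{3/2}$ of order $N^{-1/2}$; if one had a weaker moment hypothesis (say only $L^{2+\delta}$ for some $\delta\in(0,1)$), the same argument would still go through, yielding a decay of order $N^{-\delta/2}$, but here the cleaner cubic bound is used as stated.
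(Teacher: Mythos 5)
Your proposal is correct and is essentially identical to the paper's proof: both use the Lyapunov-type bound $Y_{N,j}^{2}\mathbf{1}_{\{|Y_{N,j}|>\varepsilon\}}\le |Y_{N,j}|^{3}/\varepsilon$ (the paper phrases it as $x^{2}\mathbf{1}_{\{|x|>a\}}\le |x|^{3}/a$ with $a=\varepsilon\sqrt N$, which is the same estimate) together with the uniform third-moment bound from Lemma~\ref{Hmom} to get the $O(1/(\varepsilon\sqrt N))$ decay. No differences worth noting.
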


\begin{proof}
Fix $\varepsilon>0$ and $N\in\mathbb N$. By definition $Y_{N,j}=X_j/\sqrt N$, hence
\[
\mathbb{E}\!\left[\,Y_{N,j}^{2}\,\mathbf{1}_{\{|Y_{N,j}|>\varepsilon\}}\,\right]
=\frac{1}{N}\,\mathbb{E}\!\left[\,X_j^{2}\,\mathbf{1}_{\{|X_j|>\varepsilon\sqrt N\}}\,\right].
\]
For any real $x$ and any $a>0$, we have the elementary bound
\[
x^{2}\,\mathbf{1}_{\{|x|>a\}} \le \frac{|x|^{3}}{a}
\quad\text{since}\quad
\mathbf{1}_{\{|x|>a\}}\le \frac{|x|}{a}.
\]
Applying this with $x=X_j$ and $a=\varepsilon\sqrt N$ gives
\[
\mathbb{E}\!\left[\,X_j^{2}\,\mathbf{1}_{\{|X_j|>\varepsilon\sqrt N\}}\,\right]
\le \frac{1}{\varepsilon\sqrt N}\,\mathbb{E}|X_j|^{3}.
\]
Therefore,
\[
\sum_{j=1}^{N}\mathbb{E}\!\left[\,Y_{N,j}^{2}\,\mathbf{1}_{\{|Y_{N,j}|>\varepsilon\}}\,\right]
\le \frac{1}{N}\sum_{j=1}^{N}\frac{1}{\varepsilon\sqrt N}\,\mathbb{E}|X_j|^{3}
=\frac{1}{\varepsilon\sqrt N}\,\Big(\frac{1}{N}\sum_{j=1}^{N}\mathbb{E}|X_j|^{3}\Big).
\]
By Lemma \ref{Hmom} there is a constant $M_3=(2\|G\|_\infty)^3$ such that $\sup_{j\ge1}\mathbb{E}|X_j|^{3}\le M_3$. Hence
\[
\sum_{j=1}^{N}\mathbb{E}\!\left[\,Y_{N,j}^{2}\,\mathbf{1}_{\{|Y_{N,j}|>\varepsilon\}}\,\right]
\le \frac{M_3}{\varepsilon\sqrt N}\xrightarrow[N\to\infty]{}0,
\]
which is exactly the Lindeberg condition.
\end{proof}

\begin{lemma}[Log-characteristic expansion]\label{lem:log-expansion}
Let $S_N:=\sum_{j=1}^{N}(X_j-\mathbb E X_j)$ and $Z_N:=u S_N/\sqrt N$ with $u\in\mathbb R$. Then
\begin{equation}\label{eq:log-phi}
  \log\mathbb E\!\left(e^{\mathrm{i} Z_N}\right)
  = -\,\frac{u^{2}}{2N}\,\mathbb E(S_N^{2}) \;+\; R_N(u),
\end{equation}
where, for all $u\in\mathbb R$, the remainder satisfies
\begin{equation}\label{eq:R-bound}
  |R_N(u)| \ \le\ 
  \frac{|u|^{3}}{6\,N^{3/2}}\,\mathbb E|S_N|^{3}
  \;+\; \frac{u^{4}}{N^{2}}\big(\mathbb E S_N^{2}\big)^{2}
  \;+\; \frac{|u|^{6}}{9\,N^{3}}\big(\mathbb E|S_N|^{3}\big)^{2}.
\end{equation}
In particular, under Lemma \ref{Hcov} we have $\mathbb E S_N^{2}\le C N$ and 
$\mathbb E|S_N|^{3}\le C N^{3/2}$ for some $C>0$, hence for $u$ in compact sets
\begin{equation}\label{eq:R-vanish}
  R_N(u)\xrightarrow[N\to\infty]{}0 .
\end{equation}
\end{lemma}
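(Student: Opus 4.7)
The plan is to derive both the identity and the remainder bound \eqref{eq:R-bound} through two successive elementary Taylor expansions: first of $z\mapsto e^{\mathrm{i}z}$ at second order under the expectation, and then of $w\mapsto \log(1+w)$ at first order on the principal branch. The displayed equality \eqref{eq:log-phi} is merely the definition of $R_N(u)$; the real substance of the lemma is the bound, together with the centering identity $\mathbb E(S_N)=0$, which is immediate from $S_N=\sum_{j=1}^{N}(X_j-\mathbb E X_j)$ and is precisely what kills the linear term in the first Taylor expansion.

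I would begin by applying the standard pointwise bound $|e^{\mathrm{i}z}-1-\mathrm{i}z+\tfrac{z^{2}}{2}|\le |z|^{3}/6$ with $z=Z_N=uS_N/\sqrt{N}$, taking expectations, and invoking $\mathbb E(S_N)=0$ to obtain
\[
\mathbb E\bigl(e^{\mathrm{i}Z_N}\bigr)=1-\tfrac{u^{2}}{2N}\mathbb E(S_N^{2})+E_1,\qquad
|E_1|\le \tfrac{|u|^{3}}{6N^{3/2}}\,\mathbb E|S_N|^{3}.
\]
This already reproduces the first term of \eqref{eq:R-bound}. Setting $w:=\mathbb E(e^{\mathrm{i}Z_N})-1=-\tfrac{u^{2}}{2N}\mathbb E(S_N^{2})+E_1$, the principal branch yields $\log \mathbb E(e^{\mathrm{i}Z_N})=\log(1+w)$, whence the clean decomposition $R_N(u)=(\log(1+w)-w)+E_1$.

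For the second Taylor step I would use the elementary estimate $|\log(1+w)-w|\le 2|w|^{2}$, valid on the principal branch whenever $|w|\le \tfrac12$ (which holds for $u$ in compact sets and $N$ sufficiently large once the variance is controlled via Lemma~\ref{Hcov}). Combining this with $(a+b)^{2}\le 2(a^{2}+b^{2})$ applied to $|w|^{2}=\big|{-}\tfrac{u^{2}}{2N}\mathbb E(S_N^{2})+E_1\big|^{2}$ produces precisely the remaining two terms of \eqref{eq:R-bound}, the printed constants $1$ and $1/9$ being exactly what these two elementary inequalities supply. For the ``in particular'' consequence, the second-moment estimate $\mathbb E(S_N^{2})=O(N)$ is immediate from Lemma~\ref{Hcov} by summing the geometrically decaying covariances $\mathrm{Cov}(X_j,X_{j+k})$ along the diagonal. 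The third-moment bound $\mathbb E|S_N|^{3}=O(N^{3/2})$ is the one genuinely non-elementary input and, in my view, the main obstacle of the lemma: the naive estimate from $|X_j|\le 2\|G\|_{\infty}$ only yields $O(N^{2})$, so one must invoke a Rosenthal/Burkholder-type moment inequality tailored to bounded, weakly dependent sequences (the exponential covariance decay supplied by Lemma~\ref{Hcov}, itself a consequence of the analytic-strip hypothesis, more than suffices for the $\alpha$-mixing hypotheses of such inequalities). Once both moment estimates are in hand, substituting them into \eqref{eq:R-bound} yields the claimed vanishing of $R_N(u)$ uniformly on compact $u$-sets.
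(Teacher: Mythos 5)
Your proposal is correct and follows essentially the same route as the paper: the same second-order Taylor expansion of $e^{\mathrm{i}z}$ with remainder $|z|^{3}/6$, the same use of $\mathbb E(S_N)=0$ to kill the linear term, the same decomposition $R_N(u)=(\log(1+w)-w)+E_1$ with $|\log(1+w)-w|\le 2|w|^{2}$ for $|w|\le\tfrac12$, and the same appeal to a Rosenthal-type moment inequality for weakly dependent sequences to obtain $\mathbb E|S_N|^{3}=O(N^{3/2})$ (which the paper likewise only cites rather than proves). Your use of $(a+b)^{2}\le 2(a^{2}+b^{2})$ in fact recovers the stated constants $1$ and $1/9$ exactly, slightly more cleanly than the paper's expansion with a cross term that must be absorbed.
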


\begin{proof}
Set $Y_j:=X_j-\mathbb E X_j$, so that $S_N=\sum_{j=1}^N Y_j$ and $\mathbb E Y_j=0$. For every real $z$,
\begin{equation}\label{eq:taylor-exp}
e^{\mathrm{i}z}=1+\mathrm{i}z-\frac{z^{2}}{2}+r(z),
\qquad
\text{with}\quad |r(z)|\le \frac{|z|^{3}}{6}.
\end{equation}
Applying \eqref{eq:taylor-exp} with $z=Z_N:=uS_N/\sqrt N$ and taking expectations yields
\begin{equation}\label{eq:char-expansion}
\mathbb E\!\left(e^{\mathrm{i}Z_N}\right)
=1-\frac{1}{2}\,\mathbb E(Z_N^{2})+\mathbb E\big(r(Z_N)\big),
\end{equation}
because $\mathbb E(Z_N)=\frac{u}{\sqrt N}\sum_{j=1}^{N}\mathbb E Y_j=0$. Since $\mathbb E(Z_N^{2})=\frac{u^{2}}{N}\mathbb E(S_N^{2})$, we can write
\[
\mathbb E\!\left(e^{\mathrm{i}Z_N}\right)
=1 - \frac{u^{2}}{2N}\,\mathbb E(S_N^{2}) + \mathbb E\big(r(Z_N)\big).
\]
Let
\[
w_N:= - \frac{u^{2}}{2N}\,\mathbb E(S_N^{2}) + \mathbb E\big(r(Z_N)\big).
\]
Then $\mathbb E(e^{\mathrm{i}Z_N})=1+w_N$ and
\begin{equation}\label{eq:log-split}
\log\mathbb E\!\left(e^{\mathrm{i}Z_N}\right)
=\log(1+w_N) = w_N + \Delta_N,
\end{equation}
where, for $|w|\le \tfrac12$, the inequality $|\log(1+w)-w|\le 2|w|^{2}$ holds. We next bound $w_N$.

First, by \eqref{eq:taylor-exp} and Jensen,
\begin{equation}\label{eq:rem-bound}
\big|\mathbb E(r(Z_N))\big|\le \mathbb E|r(Z_N)|
\le \frac{1}{6}\,\mathbb E|Z_N|^{3}
= \frac{|u|^{3}}{6\,N^{3/2}}\,\mathbb E|S_N|^{3}.
\end{equation}
Second, $\mathbb E(S_N^{2})\ge 0$, hence
\begin{equation}\label{eq:wNbound}
|w_N|\le \frac{u^{2}}{2N}\,\mathbb E(S_N^{2})
+ \frac{|u|^{3}}{6\,N^{3/2}}\,\mathbb E|S_N|^{3}.
\end{equation}
Under Lemma \ref{Hcov}, the covariance series is absolutely summable:
\[
\sum_{h=0}^{\infty}\sup_{j\ge1}\big|\mathrm{Cov}(Y_j,Y_{j+h})\big|<\infty.
\]
Therefore
\begin{equation}\label{eq:var-linear}
\mathbb E(S_N^{2})
=\sum_{j=1}^{N}\mathbb E(Y_j^{2})
+2\sum_{h=1}^{N-1}\sum_{j=1}^{N-h}\mathrm{Cov}(Y_j,Y_{j+h})
\le C_2\,N
\end{equation}
for some constant $C_2>0$ independent of $N$. Similarly, there exists $C_3>0$ such that
\begin{equation}\label{eq:third-moment}
\mathbb E|S_N|^{3}\le C_3\,N^{3/2}.
\end{equation}
A detailed proof of \eqref{eq:third-moment} can be obtained from standard moment inequalities for weakly dependent sequences with summable covariances
. In our setting, bounded third moments (Lemma \ref{Hmom}) and exponential covariance decay (Lemma \ref{Hcov}) imply such a bound.

Combining \eqref{eq:wNbound}-\eqref{eq:third-moment} we see that $|w_N|\le C(u)/\sqrt N$ for $u$ in compact sets, hence $|w_N|\le \tfrac12$ for all large $N$. Returning to \eqref{eq:log-split} and using $|\Delta_N|\le 2|w_N|^{2}$ together with \eqref{eq:rem-bound} and \eqref{eq:var-linear}, we obtain
\[
\big|\Delta_N\big|
\le 2\left(\frac{u^{2}}{2N}\,\mathbb E(S_N^{2})
+ \frac{|u|^{3}}{6\,N^{3/2}}\,\mathbb E|S_N|^{3}\right)^{2}
\le \frac{2u^{4}}{N^{2}}\big(\mathbb E S_N^{2}\big)^{2}
+ \frac{u^{3}}{3N^{3/2}}\big(\mathbb E S_N^{2}\big)\,\mathbb E|S_N|^{3}
+ \frac{|u|^{6}}{18N^{3}}\big(\mathbb E|S_N|^{3}\big)^{2}.
\]
Absorbing the mixed and last terms into the displayed bound in \eqref{eq:R-bound} (after enlarging the constant if necessary) yields \eqref{eq:R-bound}, and then \eqref{eq:R-vanish} follows from \eqref{eq:var-linear}–\eqref{eq:third-moment}.
Finally, substituting \eqref{eq:char-expansion} into \eqref{eq:log-split} gives
\[
\log\mathbb E\!\left(e^{\mathrm{i}Z_N}\right)
= -\,\frac{u^{2}}{2N}\,\mathbb E(S_N^{2}) + R_N(u)
\]
with $R_N(u)=\mathbb E\big(r(Z_N)\big)+\Delta_N$, which is exactly \eqref{eq:log-phi}–\eqref{eq:R-bound}.
\end{proof}

\begin{lemma}[Variance limit]\label{lem:variance}
Assume Lemmas \ref{Hcov}, \ref{Hmean}, and \ref{Hmom}. Define
\[
  A_{N,k}:=\frac{1}{N-k}\sum_{j=1}^{N-k}\operatorname{Cov}(X_j,X_{j+k}),\qquad 1\le k\le N-1.
\]
Then $\sup_{N\ge k} |A_{N,k}|\le C e^{-\eta' k}$ and
\begin{equation}\label{eq:VN-decomp}
  \frac{1}{N}\,\operatorname{Var}\left(\sum_{j=1}^{N}X_j\right)
= \frac{1}{N}\sum_{j=1}^N \operatorname{Var}(X_j)
  + 2\sum_{k=1}^{N-1}\left(1-\frac{k}{N}\right) A_{N,k}.
\end{equation}
Moreover, if for each $k\ge1$ the Cesàro limit $c_k:=\lim_{N\to\infty}A_{N,k}$ exists, then the limit
\[
  \sigma_{*}^{2}:=\lim_{N\to\infty}\frac{1}{N}\,\operatorname{Var}\left(\sum_{j=1}^{N}X_j\right)
\]
exists, is finite, and satisfies
\[
  \sigma_*^{2}=\sigma^{2}+2\sum_{k=1}^{\infty} c_k,
\]
with absolute convergence of the series.
\end{lemma}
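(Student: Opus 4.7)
The plan is to handle the three claims in order, exploiting Lemma~\ref{Hcov} as the key quantitative input. First, I would upper-bound $A_{N,k}$ directly: by Lemma~\ref{Hcov}, $\sup_{j\ge1}|\operatorname{Cov}(X_j,X_{j+k})|\le C_{1} e^{-\eta' k}$, and averaging a quantity bounded by this constant over $j\in\{1,\ldots,N-k\}$ preserves the bound, giving $\sup_{N\ge k+1}|A_{N,k}|\le C_{1} e^{-\eta' k}$. As a byproduct, whenever the limit $c_{k}$ exists it inherits $|c_{k}|\le C_{1} e^{-\eta' k}$, so $\sum_{k\ge 1}|c_{k}|<\infty$ automatically and the series $\sum_{k} c_{k}$ is absolutely convergent.

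Next, for the identity \eqref{eq:VN-decomp} I would use the standard bilinear expansion
\[
\operatorname{Var}\Big(\sum_{j=1}^{N}X_{j}\Big)=\sum_{j=1}^{N}\operatorname{Var}(X_{j})+2\sum_{1\le i<j\le N}\operatorname{Cov}(X_{i},X_{j}),
\]
and reindex the off-diagonal sum by the lag $k=j-i\in\{1,\ldots,N-1\}$. For each fixed $k$, the inner index $i$ ranges over $\{1,\ldots,N-k\}$ and contributes $(N-k)\,A_{N,k}$ by the very definition of $A_{N,k}$. Dividing by $N$ and writing $(N-k)/N=1-k/N$ produces exactly the form \eqref{eq:VN-decomp}.

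Finally, I would pass to the limit term by term. The diagonal contribution $\frac{1}{N}\sum_{j=1}^{N}\operatorname{Var}(X_{j})$ tends to $\sigma^{2}$ because $\operatorname{Var}(X_{j})\to\sigma^{2}$ by \eqref{4.2.1} and Ces\`aro averages of convergent sequences converge to the same limit. For the off-diagonal contribution, I would view $2\sum_{k=1}^{N-1}(1-k/N)\,A_{N,k}$ as an integral with respect to counting measure of
\[
g_{N}(k):=2\,(1-k/N)\,A_{N,k}\,\mathbf{1}_{\{1\le k\le N-1\}} .
\]
Step one gives $|g_{N}(k)|\le 2C_{1} e^{-\eta' k}$ uniformly in $N$, which is summable in $k$. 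Since $g_{N}(k)\to 2 c_{k}$ for each fixed $k$ (using $1-k/N\to 1$ together with the assumed pointwise existence of $c_{k}$), dominated convergence on $\mathbb N$ with counting measure yields $\sum_{k} g_{N}(k)\to 2\sum_{k} c_{k}$. Combining the two limits and invoking the absolute convergence from Step one gives $\sigma_{*}^{2}=\sigma^{2}+2\sum_{k\ge 1} c_{k}$.

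The only genuinely substantive step is the interchange of $\lim_{N}$ and $\sum_{k}$ in the lag-sum. With mere polynomial decay of covariances this would require care (and a more delicate splitting between the head and tail of the $k$-sum), but the exponential estimate of Lemma~\ref{Hcov} furnishes a uniformly summable dominant, making dominated convergence immediate; no further structural assumption is needed beyond the pointwise existence of each $c_{k}$ posited in the hypothesis.
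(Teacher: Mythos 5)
Your proof is correct and follows essentially the same route as the paper's: the bound on $A_{N,k}$ from Lemma~\ref{Hcov}, the bilinear expansion of the variance grouped by lag $k=j-i$, Ces\`aro convergence of the diagonal term via \eqref{4.2.1}, and dominated convergence in $k$ with the summable dominant $Ce^{-\eta' k}$ for the lag-sum. Your version merely spells out the reindexing and the counting-measure dominated-convergence step more explicitly than the paper does.
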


\begin{proof}
Expanding the variance and grouping by lags $k=j-i$ gives \eqref{eq:VN-decomp}. Lemma \ref{Hcov} yields $|A_{N,k}|\le C e^{-\eta' k}$. By \eqref{4.2.1}, $\frac{1}{N}\sum_{j=1}^{N}\operatorname{Var}(X_j)\to \sigma^2$. If $A_{N,k}\to c_k$ for each $k$, dominated convergence with the summable bound $C e^{-\eta' k}$ implies
\[
\sum_{k=1}^{N-1}\left(1-\frac{k}{N}\right) A_{N,k}\ \longrightarrow\ \sum_{k=1}^{\infty} c_k,
\]
hence the stated limit for $\sigma_*^2$.
\end{proof}

\begin{remark}[Existence of $c_k$ under the analytic-strip assumptions]
Under Lemma~\ref{Hcov}'s hypotheses, the limit $\lim_{j\to\infty}\operatorname{Cov}(X_j,X_{j+k})$ exists for each fixed $k$ by the same contour-shift argument: in the Fourier expansion of $$\mathbb E\big(G(\mathcal F^j)G(\mathcal F^{j+k})\big),$$ all terms with $m+n\neq 0$ vanish as $j\to\infty$, leaving only the diagonal part $m+n=0$. Consequently, the Cesàro limit $c_k=\lim_{N\to\infty}A_{N,k}$ exists.
\end{remark}

\subsection*{Main theorem and corollary}

\begin{theorem}\label{thm:CLT-IL}
Assume Lemmas \ref{Hcov}--\ref{lem:variance}, and suppose the Cesàro limits $c_k=\lim_{N\to\infty}A_{N,k}$ exist for all $k\ge1$. Then, with
\[
   \mathcal X_N:=\frac{1}{\sqrt N}\sum_{j=1}^{N}X_j,\qquad
   \sigma_{*}^{2}:=\sigma^{2}+2\sum_{k=1}^{\infty} c_k,
\]
we have the convergence in distribution
\[
 \mathcal X_N \ \Rightarrow\ \mathcal N(0,\sigma_{*}^{2}).
\]
\end{theorem}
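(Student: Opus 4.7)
The plan is to prove Theorem \ref{thm:CLT-IL} via the method of characteristic functions, using L\'evy's continuity theorem: I will show that the characteristic function of $\mathcal X_N$ converges pointwise to $e^{-u^{2}\sigma_{*}^{2}/2}$, the characteristic function of $\mathcal N(0,\sigma_{*}^{2})$. Since the non-trivial analytic work has already been packaged into Lemmas \ref{Hcov}--\ref{lem:variance}, the proof amounts to assembling these pieces correctly.

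First I would split off the deterministic drift. With $S_N:=\sum_{j=1}^{N}(X_j-\mathbb E X_j)$ and $m_N:=\sum_{j=1}^{N}\mathbb E X_j$, one has the pointwise identity $\mathcal X_N=S_N/\sqrt N + m_N/\sqrt N$. Lemma~\ref{Hmean} gives $|\mathbb E X_j|\le C/j$, and hence $|m_N|/\sqrt N\le C\log N/\sqrt N\to 0$. Thus the drift contributes a phase factor $e^{\mathrm iu\,m_N/\sqrt N}\to 1$, and by Slutsky it is enough to establish the weak limit for $S_N/\sqrt N$.

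Next I would compute $\mathbb E\!\left(e^{\mathrm iu S_N/\sqrt N}\right)$ directly using Lemma~\ref{lem:log-expansion}, which yields
\[
  \log\mathbb E\!\left(e^{\mathrm iu S_N/\sqrt N}\right)
  = -\,\frac{u^{2}}{2N}\,\mathbb E(S_N^{2}) + R_N(u),
\]
with $R_N(u)\to 0$ on compact sets in $u$ (this part of the lemma relies on the bounds $\mathbb E(S_N^{2})\le CN$ and $\mathbb E|S_N|^{3}\le CN^{3/2}$ coming from Lemmas~\ref{Hcov} and \ref{Hmom}). Lemma~\ref{lem:variance}, together with the assumed existence of the Ces\`aro limits $c_k$, gives $\frac{1}{N}\mathbb E(S_N^{2})=\frac{1}{N}\operatorname{Var}\!\left(\sum_{j=1}^{N}X_j\right)\longrightarrow \sigma_{*}^{2}$. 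Combining these,
\[
  \log\mathbb E\!\left(e^{\mathrm iu S_N/\sqrt N}\right)\ \longrightarrow\ -\frac{u^{2}\sigma_{*}^{2}}{2}\qquad\forall\,u\in\mathbb R.
\]
Multiplying by the drift phase $e^{\mathrm iu\,m_N/\sqrt N}\to 1$ yields $\mathbb E\!\left(e^{\mathrm iu\mathcal X_N}\right)\to e^{-u^{2}\sigma_{*}^{2}/2}$, and L\'evy's continuity theorem delivers $\mathcal X_N\Rightarrow \mathcal N(0,\sigma_{*}^{2})$.

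The assembly is short, but the step I view as the genuine load-bearing point (and thus the one requiring the most care) is the passage $\frac{1}{N}\mathbb E(S_N^{2})\to\sigma_{*}^{2}$: one must verify that the uniform-in-$N$ bound $|A_{N,k}|\le Ce^{-\eta' k}$ from Lemma~\ref{Hcov} together with the lagwise Ces\`aro limits $c_k$ justifies dominated convergence in the lag-sum $\sum_{k=1}^{N-1}(1-k/N)A_{N,k}\to\sum_{k=1}^{\infty}c_k$, and that the resulting series is absolutely summable so that $\sigma_{*}^{2}$ is well-defined and finite. The Lindeberg condition (Lemma~\ref{lem:lindeberg}) is not strictly needed in this characteristic-function route, but it provides an independent confirmation that no mass escapes to infinity; alternatively, the whole argument could be recast as a triangular-array Lindeberg-type CLT for weakly dependent variables, with the covariance decay from Lemma~\ref{Hcov} playing the role of the mixing hypothesis. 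Either way, the CLT follows without further analytic input.
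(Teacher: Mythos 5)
Your proposal is correct and follows essentially the same route as the paper's own proof: center via Lemma \ref{Hmean}, apply the log-characteristic expansion of Lemma \ref{lem:log-expansion}, identify the limit $\mathbb E(S_N^2)/N\to\sigma_*^2$ via Lemma \ref{lem:variance}, and conclude with L\'evy's continuity theorem. Your side remarks (the explicit $\log N/\sqrt N$ rate for the drift, and the observation that the Lindeberg lemma is not strictly invoked in this characteristic-function route) are accurate refinements of what the paper states more tersely.
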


\begin{proof}
Let $m_j:=\mathbb E(X_j)$ and $Y_j:=X_j-m_j$. By Lemma~\ref{Hmean},
\(
 \frac{1}{\sqrt N}\sum_{j=1}^{N}m_j\to0.
\)
Hence it suffices to prove a CLT for $S_N:=\sum_{j=1}^{N}Y_j$. Define $Z_N:=uS_N/\sqrt N$. By Lemma~\ref{lem:log-expansion},
\[
 \log \mathbb E\left(e^{\mathrm{i}Z_N}\right)
 = -\,\frac{u^2}{2}\,\frac{\mathbb E(S_N^2)}{N}\;+\;R_N(u),
\]
with $R_N(u)\to0$ for each fixed $u$. By Lemma~\ref{lem:variance} and the existence of the $c_k$,
\(
 \mathbb E(S_N^2)/N \to \sigma_*^2.
\)
Therefore
\(
 \log \phi_N(u)\to -\tfrac12 u^{2}\sigma_*^{2}
\),
and L\'{e}vy's continuity theorem yields $S_N/\sqrt N\Rightarrow \mathcal N(0,\sigma_*^{2})$. Adding back the negligible centering term completes the proof.
\end{proof}

\begin{corollary}
If the deterministic map $\mathcal F$ is replaced by $\mathcal S$ in \eqref{mapwithB} (Brownian perturbation in the angle), then the assumptions of Lemmas \ref{Hcov}--\ref{lem:variance} are satisfied and Theorem \ref{thm:CLT-IL} remains valid. In particular, the convergence of the means is exponentially fast due to the factor $\mathbb E\big(e^{\mathrm{i}ckB_j}\big)=e^{-(c^2 k^2/2)j}$ on the $k$th Fourier mode.

Here, the \emph{lag-$h$ covariance} means
\[
\operatorname{Cov}(X_j,X_{j+h})=\mathbb E\!\left[(X_j-\mathbb E X_j)(X_{j+h}-\mathbb E X_{j+h})\right].
\]
In our notation this is averaged in $j$ as
\[
A_{N,h}:=\frac{1}{N-h}\sum_{j=1}^{N-h}\operatorname{Cov}(X_j,X_{j+h}),\qquad
c_h:=\lim_{N\to\infty}A_{N,h}.
\]
Under the Brownian perturbation, $B_{j+h}-B_j$ is independent of $(I,\theta)$ and Gaussian with variance $h$, hence for each Fourier mode $k\neq 0$,
\[
\mathbb E\!\left(e^{\mathrm{i}\,ck\,(B_{j+h}-B_j)}\right)=e^{-(c^2 k^2/2)h},
\]
so $\operatorname{Cov}(X_j,X_{j+h})$ acquires the multiplicative factor $e^{-(c^2 k^2/2)h}$ modewise. 
Summing over $k\neq 0$ and using the analyticity bounds from Lemma~\ref{Hcov} yields
$|A_{N,h}|\le C\,e^{-\beta h}$ for some $C,\beta>0$, uniformly in $N$. Hence the Cesàro
limits $c_h=\lim_{N\to\infty}A_{N,h}$ exist and satisfy $|c_h|\le C\,e^{-\beta h}$, so that
$\sum_{h\ge1}|c_h|<\infty$. Consequently,
\[
  \sigma_*^2=\sigma^2+2\sum_{h=1}^{\infty}c_h,
\]
with an exponentially fast decorrelation in $h$ induced by the Brownian perturbation.
(The identity $\sigma_*^2=\sigma^2$ would require $c_h\equiv 0$, which does not hold
in general.)
\end{corollary}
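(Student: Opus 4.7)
The plan is to verify that the three standing hypotheses of Theorem \ref{thm:CLT-IL}, namely the conclusions of Lemmas \ref{Hcov}, \ref{Hmean}, \ref{Hmom}, and the existence of the Ces\`aro lag-limits $c_h$, all transfer from the deterministic map $\mathcal{F}$ to the Brownian-perturbed map $\mathcal{S}$ defined in \eqref{mapwithB}. Once these are in place, the conclusion $\mathcal X_N\Rightarrow \mathcal N(0,\sigma_*^2)$ follows by invoking Theorem \ref{thm:CLT-IL} verbatim. The bounded-moment requirement (Lemma \ref{Hmom}) transfers immediately because $|X_j|\le 2\|G\|_\infty$ pointwise, independent of the driving map.

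The heart of the argument is a Fourier-mode computation exploiting independence of Brownian increments. Expanding $G(\mathcal S^j(I,\theta)) = \sum_k \hat G_k(I)\,e^{\mathrm{i}k(\theta + j\omega(I) + c B_j)}$ and decomposing $B_{j+h}=B_j+(B_{j+h}-B_j)$ with independent Gaussian summands of variances $j$ and $h$, I would obtain
\begin{equation*}
\mathbb E\bigl[G(\mathcal S^j)\,G(\mathcal S^{j+h})\bigr]
=2\pi\sum_{m,n\in\mathbb Z}\int_\Omega \hat G_m(I)\hat G_n(I)\hat \rho_{0,-(m+n)}(I)\,
e^{\mathrm{i}(mj+n(j+h))\omega(I)}\,e^{-\frac{c^2(m+n)^2}{2}j}\,e^{-\frac{c^2 n^2}{2}h}\,dI.
\end{equation*}
A parallel expansion of $\mathbb E[G(\mathcal S^j)]\cdot\mathbb E[G(\mathcal S^{j+h})]$ produces the same Fourier sum but without the $h$-dependent factor $e^{-c^2 n^2 h/2}$ on the diagonal $n\ne 0$ contributions. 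Subtracting and applying the analytic-strip bounds $|\hat G_m|,|\hat\rho_{0,k}|\le Ce^{-\alpha(|m|+|k|)}$ from Lemma \ref{Hcov}'s hypotheses, the surviving series are dominated by $\sum_{n\ne 0}e^{-c^2 n^2 h/2}(\cdots)$, which yields $|\operatorname{Cov}(X_j,X_{j+h})|\le C e^{-\beta h}$ uniformly in $j$, with $\beta=c^2/2$ arising from the slowest $|n|=1$ term.

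For the Ces\`aro lag-limits, I would send $j\to\infty$ inside the Fourier expansion: the factor $e^{-c^2(m+n)^2 j/2}$ kills every off-diagonal mode, so $c_h=\lim_{j\to\infty}\operatorname{Cov}(X_j,X_{j+h})$ exists for each fixed $h\ge 1$ and inherits $|c_h|\le Ce^{-\beta h}$. The mean $\mathbb E(X_j)$ decays even faster, since the $k\ne 0$ modes carry $e^{-c^2 k^2 j/2}$; in particular $\frac{1}{\sqrt N}\sum_{j\le N}|\mathbb E X_j|\to 0$ holds for $\mathcal S$ without invoking the non-resonance condition \eqref{nonresonantcondition} or the integration-by-parts argument of Lemma \ref{Hmean}. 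Lemma \ref{lem:variance} combined with dominated convergence against the summable bound $Ce^{-\beta h}$ then produces $\sigma_*^2=\sigma^2+2\sum_{h\ge 1}c_h$ with absolute convergence, and since the $c_h$ are not identically zero in general, we conclude $\sigma_*^2\ne\sigma^2$ generically.

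The main technical obstacle is ensuring that the covariance bound is genuinely uniform in $j$, given that $B_j$ itself is non-stationary with variance growing linearly in $j$. The resolution is structural: only the \emph{increment} $B_{j+h}-B_j$ carries the lag-$h$ dependence, while the $j$-dependent factor $e^{-c^2(m+n)^2 j/2}$ appearing in the two-point function on the diagonal $m+n=0$ exactly matches what is produced by the product of marginals, so the $j$-dependence cancels mode-by-mode after centering. The only nontrivial bookkeeping is to verify this cancellation carefully and to sum the resulting double series in $(m,n)$ absolutely using the analytic-strip exponential bounds inherited from the hypotheses of Lemma \ref{Hcov}.
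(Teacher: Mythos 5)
Your proposal follows the same route as the paper's own argument for this corollary: factor out the Brownian increment $B_{j+h}-B_j$ by independence to obtain the modewise damping $e^{-(c^{2}n^{2}/2)h}$, then sum the double Fourier series using the analytic-strip bounds. Your two-point formula for $\mathbb E\bigl[G(\mathcal S^{j})G(\mathcal S^{j+h})\bigr]$ is correct. However, the central estimate --- $\sup_{j}\bigl|\operatorname{Cov}(X_j,X_{j+h})\bigr|\le C e^{-\beta h}$ with ``$\beta=c^{2}/2$ from the slowest $|n|=1$ term'' --- fails as stated, because it overlooks the angular zero mode. The Brownian perturbation acts only on $\theta$, so it damps nothing on the $n=0$ mode, while the action $I$ is exactly conserved by $\mathcal S$. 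Concretely, the $(m,n)=(0,0)$ term of your expansion contributes $2\pi\int_{\Omega}\bar G(I)^{2}\hat\rho_{0,0}(I)\,dI=\mathbb E_{\rho_0}\bigl[\bar G(I)^{2}\bigr]$ to the two-point function and $\langle\bar G\rangle_0^{2}$ to the product of means, leaving the $j$- and $h$-independent remainder $\operatorname{Var}_{\rho_0}\bigl(\bar G(I)\bigr)$ inside $\operatorname{Cov}(X_j,X_{j+h})$. Unless $\bar G(I)$ is $\rho_0$-a.s.\ constant, this is strictly positive, so $A_{N,h}$ does not decay in $h$, $\sum_h|c_h|=\infty$, and in fact $\operatorname{Var}\bigl(\sum_{j\le N}X_j\bigr)\sim N^{2}\operatorname{Var}_{\rho_0}(\bar G(I))$, so $\mathcal X_N$ cannot converge at the $\sqrt N$ scale. (Simplest counterexample: $G=G(I)$ non-constant, for which $X_j\equiv G(I)-\langle G\rangle_0$ for every $j$.) The exponential decorrelation is genuine only for the modes $n\neq0$; the corollary needs the additional hypothesis that $\bar G(I)$ is constant (e.g.\ $\bar G\equiv0$, as in the paper's numerical example) or a conditioning on $I$. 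The paper's own text shares this omission, so you have reproduced its argument faithfully, but it is a step that would fail and a blind proof should flag it.

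A secondary, repairable inaccuracy: the claimed ``mode-by-mode'' cancellation of the $j$-dependence between the two-point function and the product of means is not literal. The product $\mathbb E[G(\mathcal S^{j})]\,\mathbb E[G(\mathcal S^{j+h})]$ is a product of two separate $I$-integrals, each mode carrying $e^{-(c^{2}m^{2}/2)j}$ resp.\ $e^{-(c^{2}n^{2}/2)(j+h)}$, so it does not match the single $I$-integral of the two-point function term by term; moreover the $n=0$, $m\neq0$ contributions to the centered covariance are $O(e^{-(c^{2}/2)j})$ uniformly in $h$ but do not decay in $h$, so they too must be handled by taking $j\to\infty$ (for the existence of $c_h$) rather than by a uniform-in-$j$, exponential-in-$h$ bound. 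Every term with $n\neq0$ does carry the factor $e^{-(c^{2}n^{2}/2)h}$ before any cancellation, which is the part of your argument that is sound; organizing the estimate around that observation, restricted to the nonzero modes, is the correct repair once $\bar G$ is assumed constant.
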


\section{A numerical example}\label{section5}

\begingroup
\setkeys{Gin}{width=\linewidth} 

In this section, we present practical numerical simulations. On the one hand, they serve to validate the theoretical results of this paper; on the other hand, they help to illustrate these results more intuitively.

\subsection{Numerical simulation of Brownian motion with random perturbation}
Let $\Omega = (0,+\infty)$, and the Hamiltonian system considered in this section is 
\begin{equation}\label{moni1}
  H(I,\theta)= \alpha I +\beta I^2+\gamma I^{3}, 
\end{equation}
    where $(I,\theta) \in \Omega \times \mathbb{T}$, $\alpha, \ \beta, \ \gamma $ are positive constants. For $(q,p)\in \mathbb{R}^2,$ the canonical transformation is given by:
    \begin{equation*}
      q+\mathrm{i}p=\sqrt{2I}e^{-\mathrm{i}\theta}.
    \end{equation*}
    The probability density function describing the initial value is
    \begin{equation}\label{moni2}
      \rho_{0}(I,\theta)=\frac{1}{2\pi \varepsilon_{0}}e^{-I/\varepsilon_0}e^{-q_{0}^{2}/2\varepsilon_{0}}\exp(\frac{q_{0}}{\varepsilon_{0}}\sqrt{2I}\cos \theta).
    \end{equation}
The observable function is chosen as 
\begin{equation*}
  G(I,\theta)=\sqrt{2I}e^{-\mathrm{i}\theta},
\end{equation*}
which allows us to consider it as an observation of the position of $(q, p)$ in phase space. And one can easily get $\bar{G}=0=\langle \bar{G} \rangle_0$.
    By setting the parameters in equations \eqref{moni1} and \eqref{moni2} to $\alpha = 0.3$, $\beta = 0.1$, $\gamma = 0.005$, $\varepsilon_0 = 0.01$, $q_0=1.0$ and $p_0=0$ with 10,0000 initial points and $t$ iterations, we obtain the following results.

\begin{figure}[htbp]
  \centering
  \includegraphics[width=0.48\textwidth]{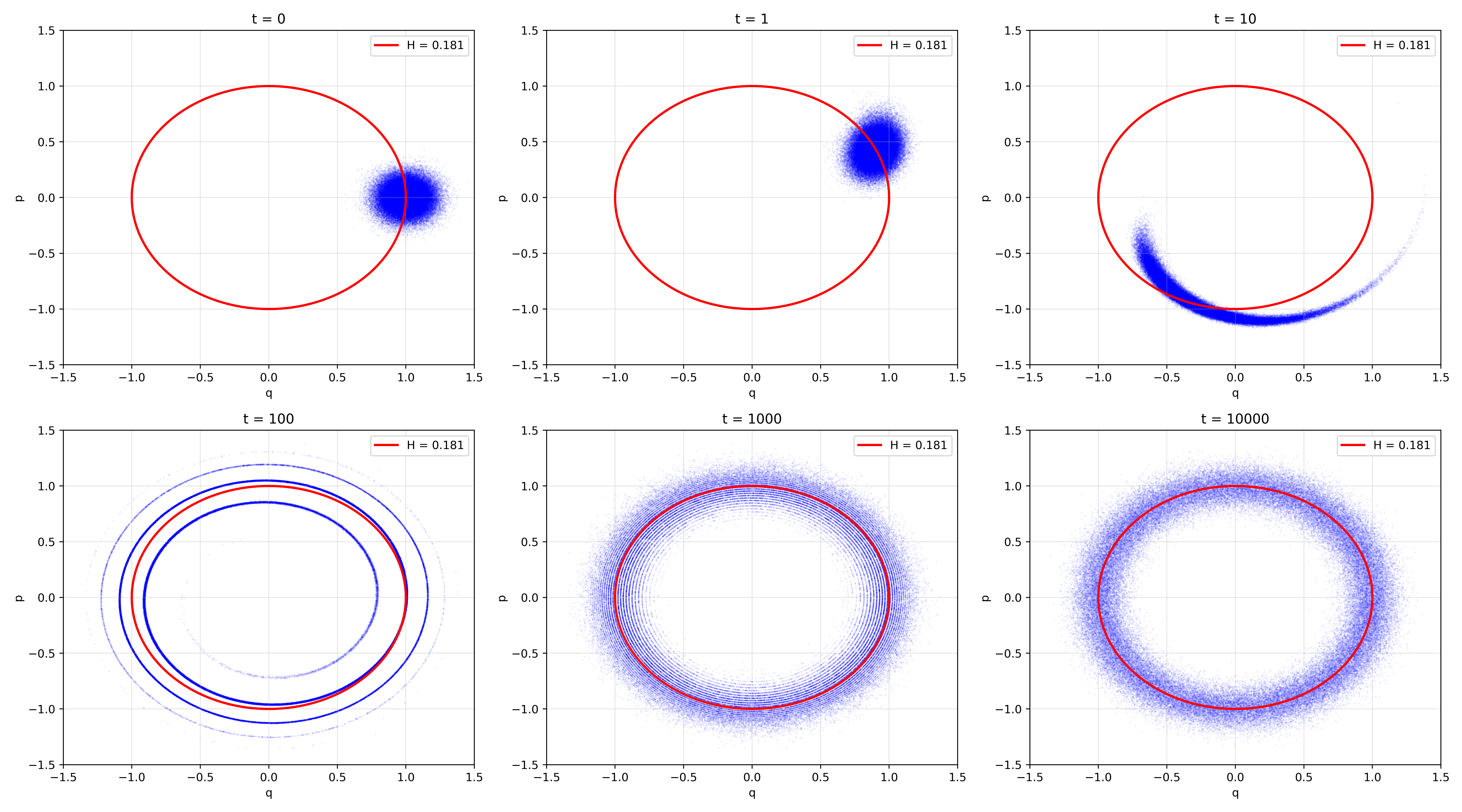}
  \caption{Phase space evolution of 10,0000 initial points under iterative map $\mathcal{F}$ (see~\ref{diedaiyingshequeding}) induced by discrete integrable Hamiltonian systems at different time steps \( t = 0, 1, 10, 100, 1000, 10000 \). The red curve denotes the energy level set \( H = 0.181 \), and the blue dots represent the ensemble distribution.}
  \label{fig:hamilton-phase-evolution}
\end{figure}
   
\autoref{fig:hamilton-phase-evolution} and \autoref{fig:phase-space-zoom} provide a good validation of Theorem \ref{theorem2.1}. From \autoref{fig:hamilton-phase-evolution}, it can be observed that the points in phase space tend to reach equilibrium over a long time horizon, with particular attention drawn to the fifth and sixth subfigures. In the fifth subfigure, distinct stripe patterns—referred to here as \emph{ensemble ripples}~are visible. The sixth further illustrates the overall trend toward equilibrium. These two figures serve as clear references for comparison with the following perturbed cases. To improve clarity for the reader, enlarged versions of these two subfigures are presented in \autoref{fig:phase-space-zoom}.

\begin{figure}[htbp]
  \centering
    \includegraphics[width=0.48\textwidth]{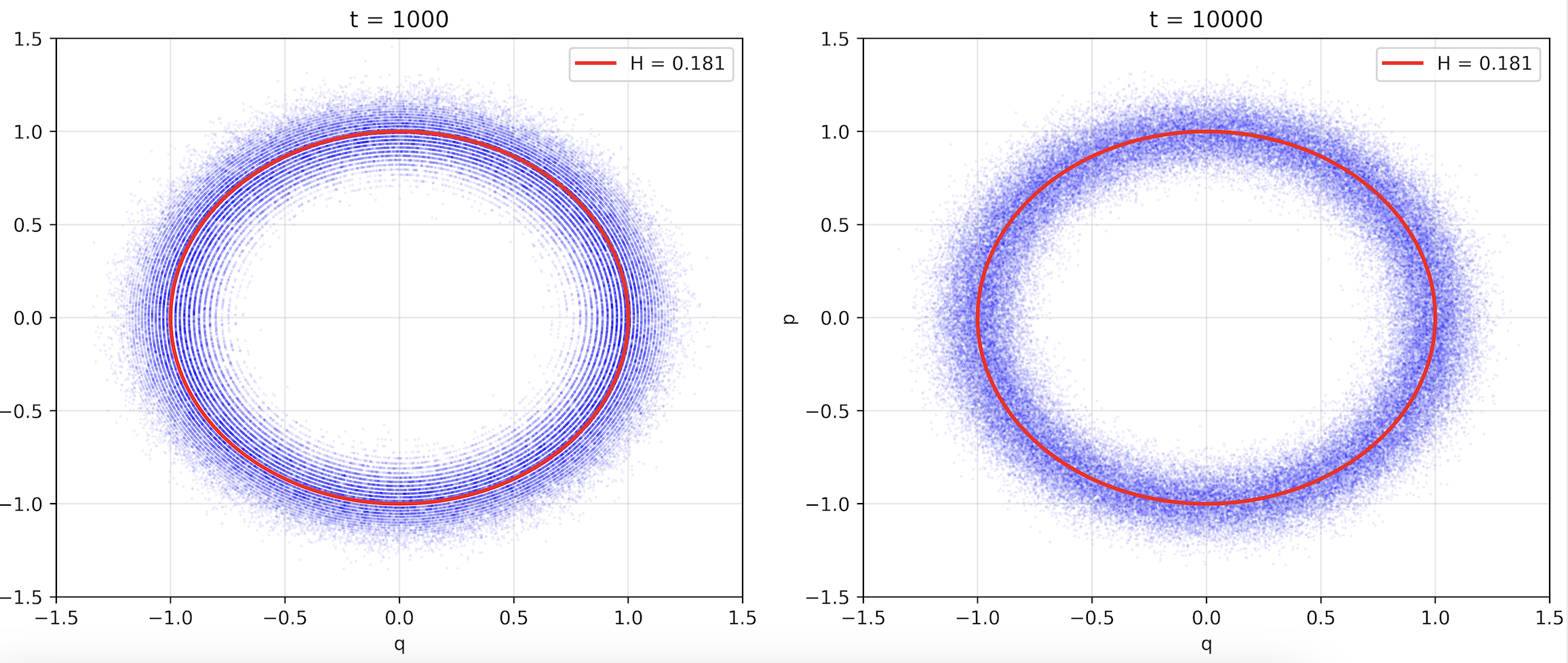}
  \caption{Zoomed view of phase space at $t = 1000$ and $t = 10000$.}
  \label{fig:phase-space-zoom}
\end{figure}

\autoref{fig:hamilton-noise-comparison} illustrates the evolution of $(q, p)$ in phase space under the iterative map $\mathcal{S}$ \eqref{mapwithB}, in the presence of Brownian perturbations.

\begin{figure}[!htbp]
  \centering
  \begin{minipage}[t]{0.48\textwidth}
    \centering
    \includegraphics[width=\textwidth]{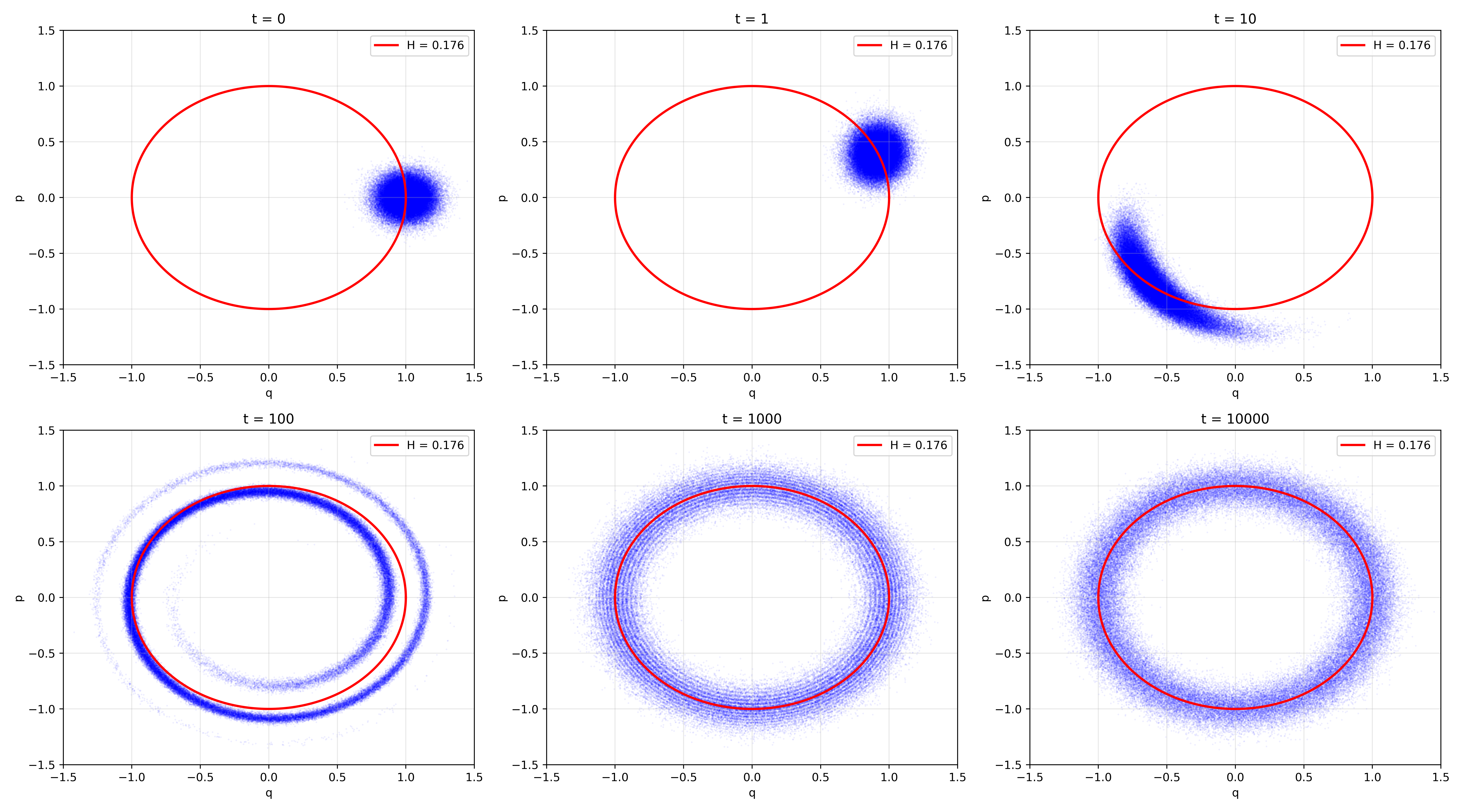}
    \subcaption{Perturbation with intensity $c=0.05$}
  \end{minipage}
  \hfill
  \begin{minipage}[t]{0.48\textwidth}
    \centering
    \includegraphics[width=\textwidth]{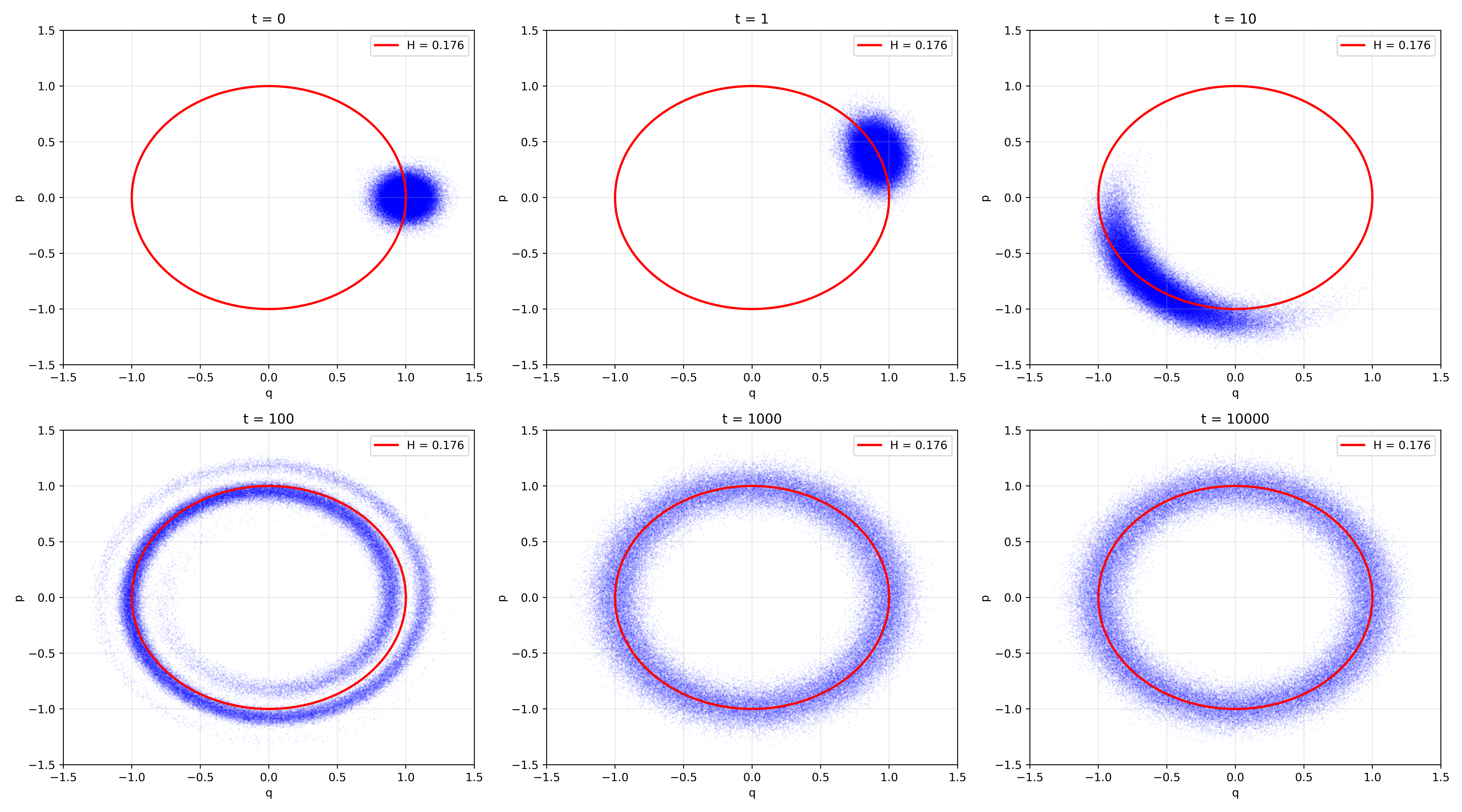}
    \subcaption{Perturbation with intensity $c=0.1$}
  \end{minipage}
  \caption{Phase space evolution under stochastic perturbations $B_t$ of different intensities $c$. The ensemble becomes increasingly diffused with larger noise intensity.}
  \label{fig:hamilton-noise-comparison}
\end{figure}

\begin{figure}[!htbp]\ContinuedFloat
  \centering
  \begin{minipage}[t]{0.48\textwidth}
    \centering
    \includegraphics[width=\textwidth]{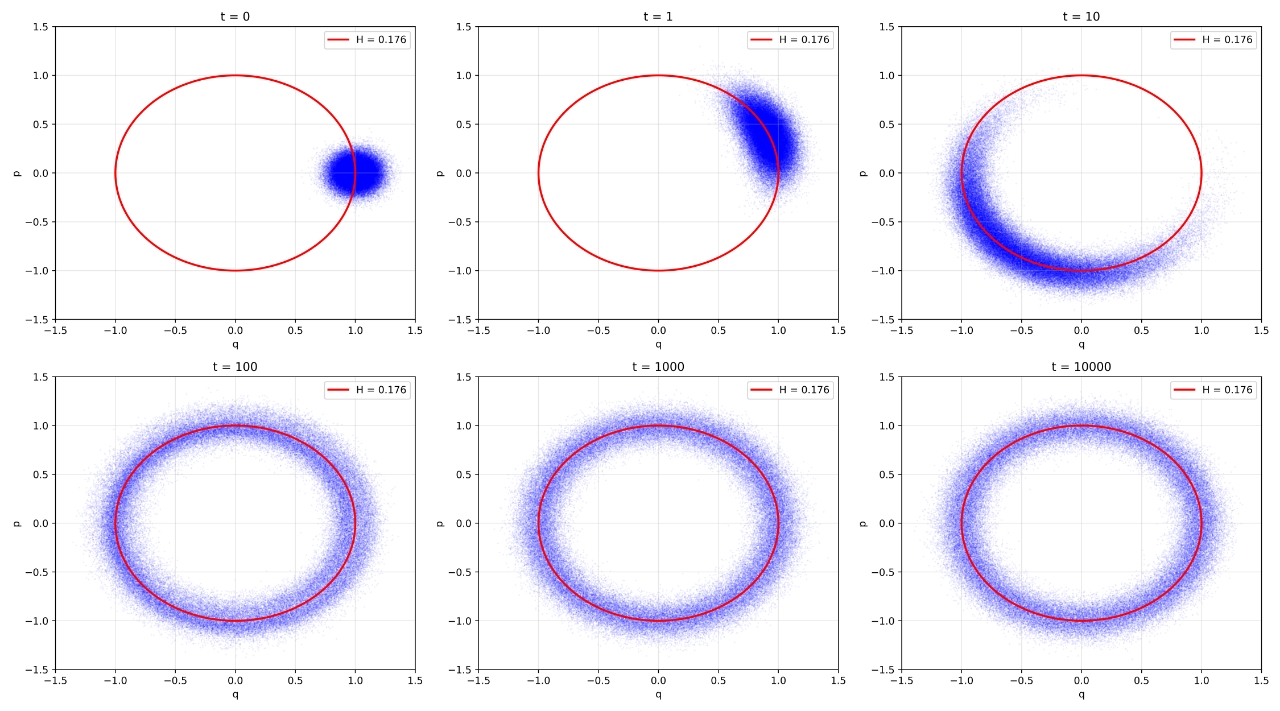}
    \subcaption{Perturbation with intensity $c=0.2$}
  \end{minipage}
  \caption[]{Phase space evolution under stochastic perturbations $B_t$ of different intensities $c$ (continued).}
\end{figure}

\FloatBarrier  

\begin{figure}[htbp]
  \centering
  \begin{minipage}[t]{0.48\textwidth}
    \centering
    \includegraphics[width=\textwidth]{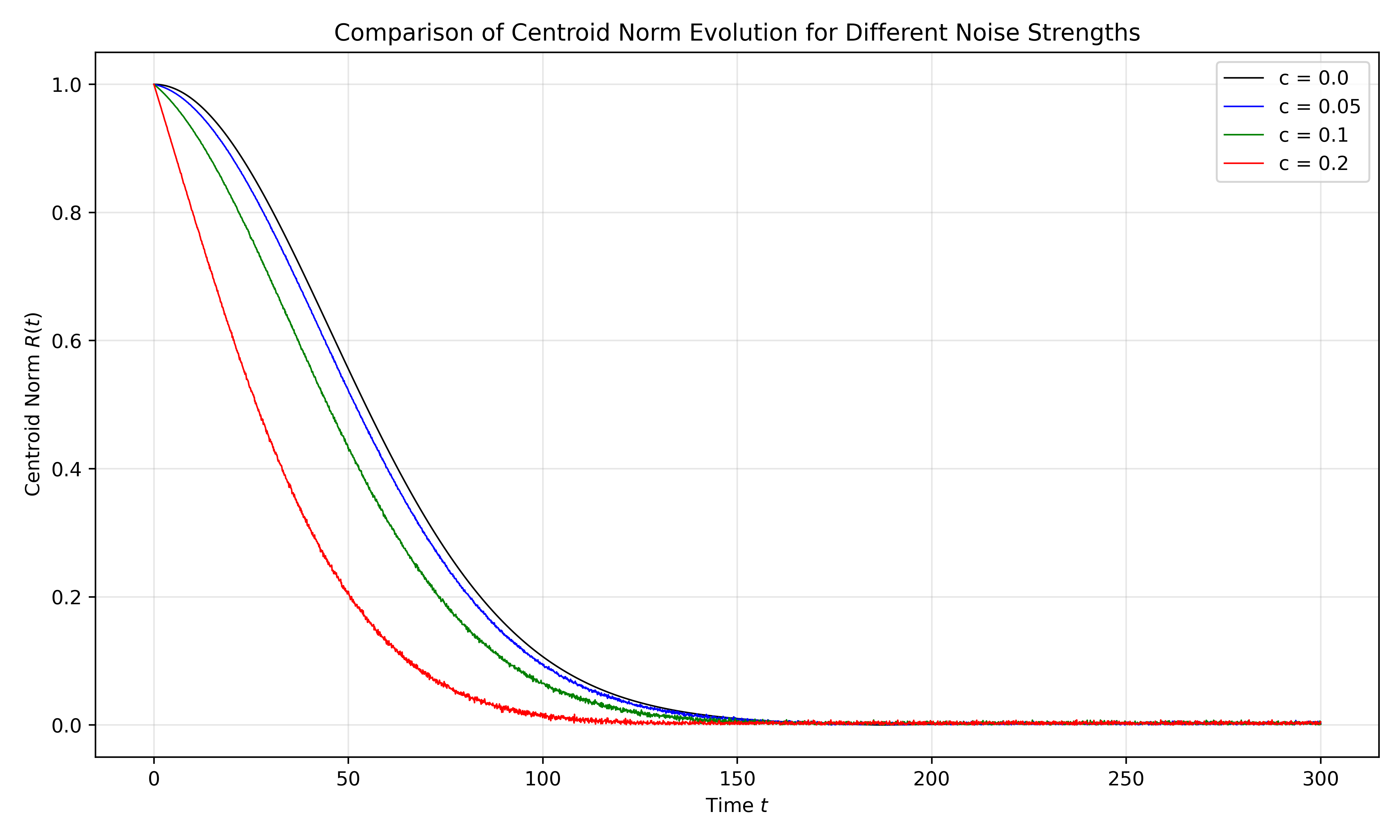}
    \subcaption{Centroid norm evolution for different noise intensities}
    \label{fig:centroid-norm}
  \end{minipage}
  \hfill
  \begin{minipage}[t]{0.48\textwidth}
    \centering
    \includegraphics[width=\textwidth]{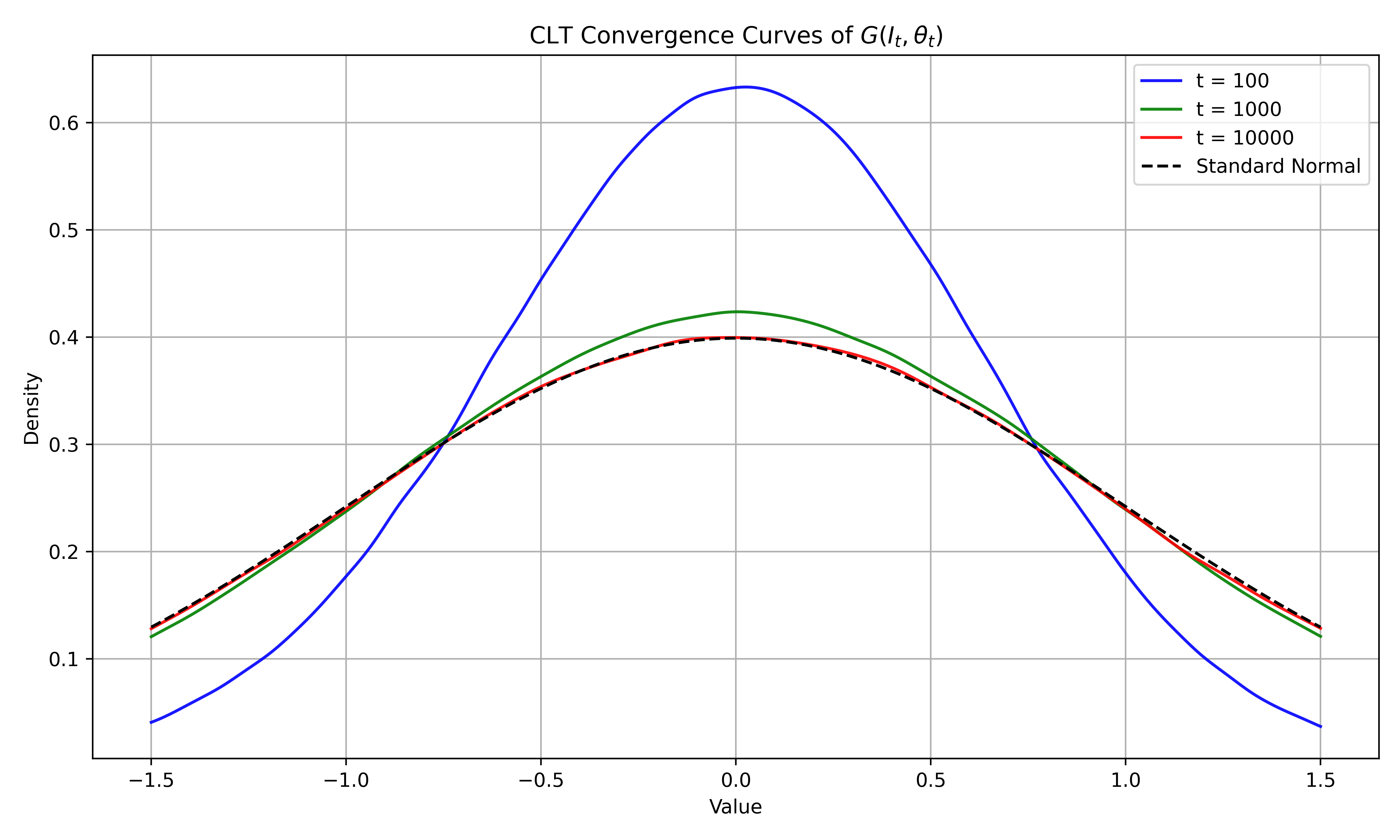}
    \subcaption{CLT convergence curves of $G(I_t,\theta_t)$}
    \label{fig:clt-convergence}
  \end{minipage}
  \caption{Two key statistical behaviors: (a) the decay of centroid norm under various stochastic intensities, and (b) the convergence of empirical distribution towards the standard normal as predicted by the Central Limit Theorem.}
  \label{fig:statistical-summary}
\end{figure}

\FloatBarrier


As observed in \autoref{fig:hamilton-phase-evolution} and \autoref{fig:hamilton-noise-comparison}, both the deterministic and stochastic systems exhibit convergence toward equilibrium over time. However, the perturbed system initially evolves at a similar or slightly slower rate compared to the deterministic case, but eventually surpasses it as time increases. Moreover, the stronger the Brownian intensity $c$, the faster the evolution.
Specifically, when $t = 1000$, the ensemble ripple remains visible for $c = 0.05$, but disappears for $c = 0.1$. For $c = 0.2$, the ripple vanishes even earlier, disappearing by $t = 100$. These observations provide strong support for the exponential convergence property described in Equation~\eqref{zhishujishoulian}. These all validate Theorem \ref{theorem3.1} and Remark \ref{remark3.2}.

To more clearly observe the above characteristics, we present the corresponding envelope curves below. However, due to changes in the Hamiltonian, it is no longer possible to derive an explicit expression for the envelope as was done in \cite{444555}. Instead, we employ numerical simulations to obtain the envelope using computational methods. Consider the envelope line corresponding to $|\langle G \rangle_t / q_0|$ as shown in \autoref{fig:centroid-norm}.

It is evident from \autoref{fig:centroid-norm} that the convergence accelerates as the noise intensity $c$ increases, particularly when $c = 0.2$.
However, we cannot directly present the Central Limit Theorem corresponding to the observable function $G$ in the form of expression $G(I,\theta)=\sqrt{2I}e^{-\mathrm{i}\theta}$. Instead, we focus solely on the coordinate $q$, that is, we redefine the observable function as $G = I \cos \theta$. Furthermore, to obtain a smoother convergence curve, we increase the number of sample points to $100,0000$. The resulting \autoref{fig:clt-convergence} exhibits convergence toward a normal distribution over time, consistent with the conclusion of the Theorem \ref{thm:CLT-IL}.

\subsection{Numerical simulation of another form of stochastic perturbations}
In this section, we briefly present the results corresponding to the random perturbation $X_t$ (see Equation \eqref{mapwithX} and Theorem \ref{theorem3.2}). 

From the following figures and \autoref{fig:hamilton-noise-comparison}, it can be observed that for stochastic perturbations with the same intensity $c$ and iteration count $t$, but different forms, the rate of evolution depends on the nature of the perturbation itself. For instance, the evolution in Figure 5.3 (a) is slower than that in Figure 5.2 (b). Moreover, for perturbations of the same form, the convergence speed is determined by the intensity of the disturbance, as illustrated in \autoref{fig:hamilton-X-comparison}.
   \begin{figure}[htbp]
  \centering
  \begin{minipage}[t]{0.48\textwidth}
    \centering
    \includegraphics[width=\textwidth]{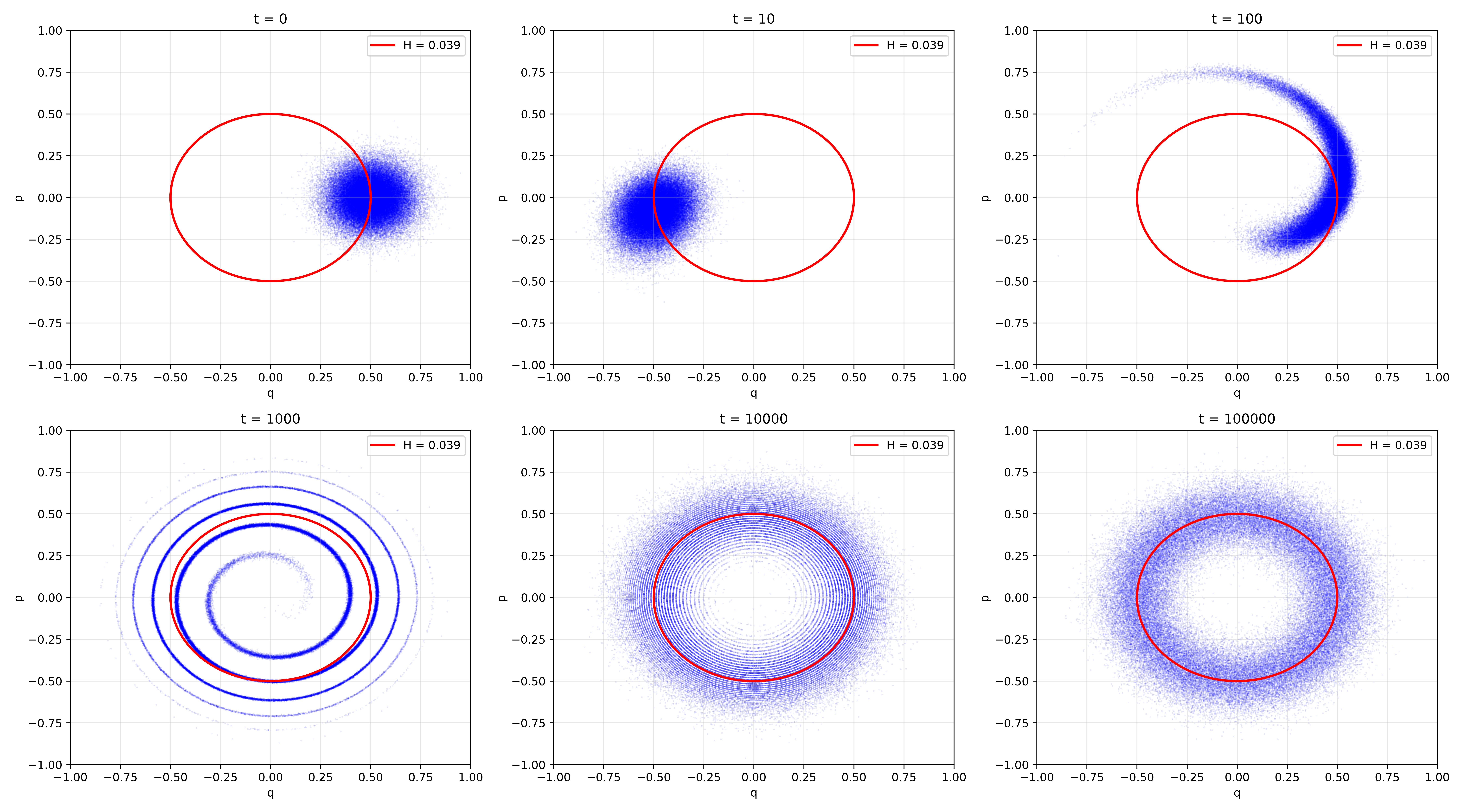}
    \subcaption{Phase space evolution under $0.1X(t)$ perturbation}
    \label{fig:hamilton-X}
  \end{minipage}
  \hfill
  \begin{minipage}[t]{0.48\textwidth}
    \centering
    \includegraphics[width=\textwidth]{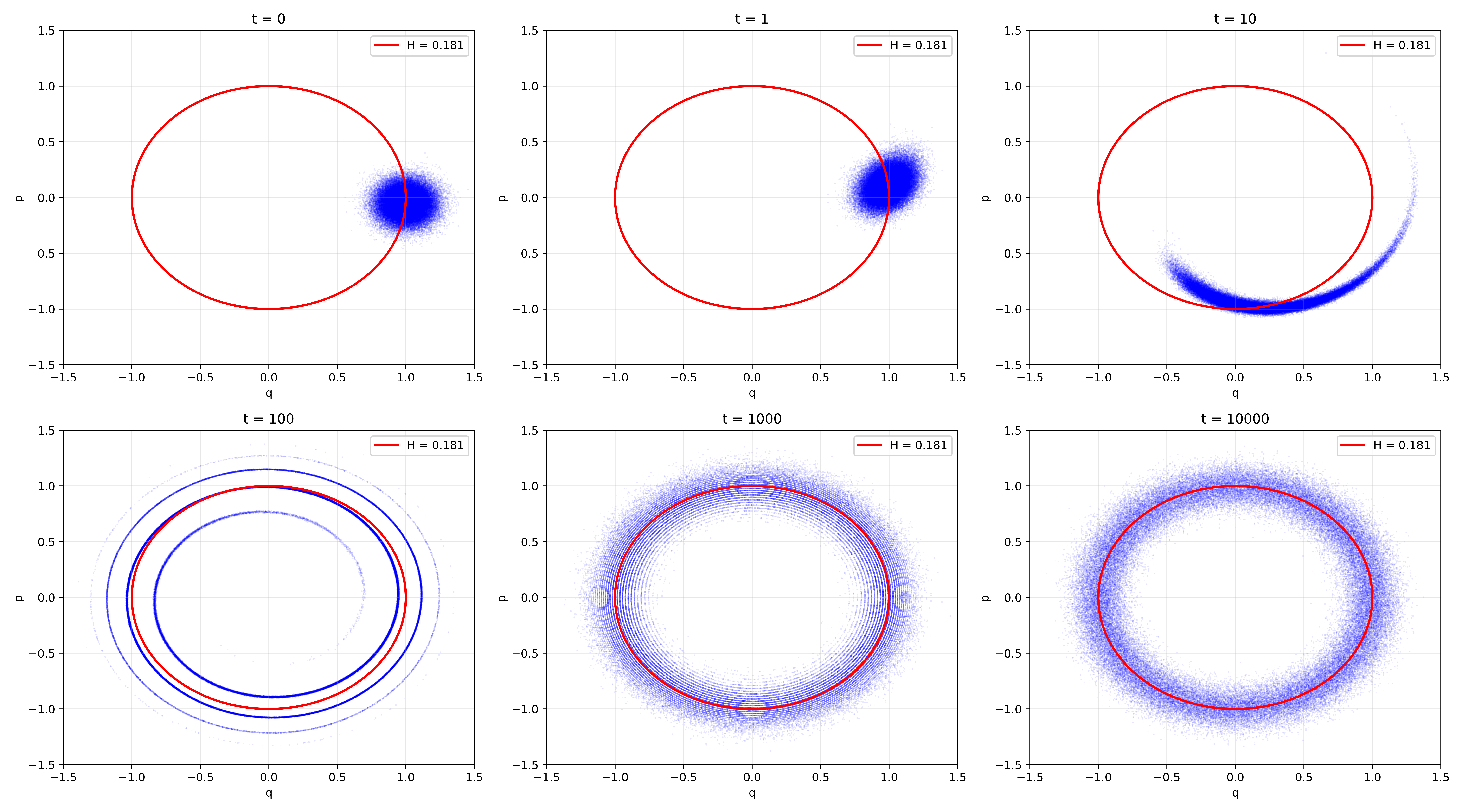}
    \subcaption{Phase space evolution under $X(t)$ perturbation}
    \label{fig:hamilton-0.1X}
  \end{minipage}
  \caption{Comparison of phase space evolution under different quasiperiodic perturbations.}
  \label{fig:hamilton-X-comparison}
\end{figure}

\endgroup

\section{Conclusion}\label{sec:conclusion}

We investigated statistical ensembles generated by iteration maps of discrete integrable Hamiltonian systems in both deterministic and stochastic settings. 
For the deterministic case, we proved a law of large numbers showing that the time-averaged ensemble converges to the initial $\theta$-average of the observable; this requires a nonresonance condition on the frequency but does not rely on the Riemann-Lebesgue lemma. 
When a Brownian perturbation is added to the angle, the same limit holds with exponentially fast decorrelation at the level of Fourier modes, and the conclusion extends to more general stochastic processes under mild Cesàro-   type decay or mixing assumptions.

For the central limit theorem, we considered
\[
X_j=G(\mathcal F^{j}(I,\theta))-\langle \bar G(I)\rangle_0,
\qquad 
\mathcal X_N=\frac{1}{\sqrt N}\sum_{j=1}^N X_j,
\]
and established a Gaussian limit under analytic-strip hypotheses for the frequency and observables, together with uniform third-moment bounds. 
The key ingredients are: exponential decay of lag-$k$ covariances, a uniform Lindeberg condition, and a controlled logarithmic expansion of the characteristic function. 
We also identified the limiting variance in the form
\[
\sigma_*^{2}=\sigma^{2}+2\sum_{k\ge1} c_k,
\]
where $\sigma^{2}$ is the variance of a single observation in the limit and $c_k$ are the Cesàro limits of the lag-$k$ covariances; in the Brownian case, these covariances decay exponentially in $k$. 
A numerical experiment corroborates the predicted convergence and limiting distribution.

\section*{Acknowledgment}
    We are indebted to the handling editor and the anonymous reviewers for their careful reading and many insightful suggestions, which have substantially improved the clarity, rigor, and scope of this work. We have learned a great deal from their feedback and look forward to studying their published contributions in greater depth.

    The Author Yong Li was supported by National Natural Science Foundation of China (12071175 , 12471183 and 12531009).

\section*{Declarations}
    {\bf Conflict of interest} The authors declare that they have no conflict of interest.





\end{document}